\numberwithin{equation}{section}
\definecolor{VerdeOlivo}{rgb}{0.3,0.5,0.1}
\definecolor{Magenta}{rgb}{.65,0.15,.2}
\definecolor{Gris}{gray}{0.7}
\newtheorem{Theorem}{Theorem}[section] 
\newtheorem{Definition}[Theorem]{Definition} 
\newtheorem{Lemma}[Theorem]{Lemma} 
\newtheorem{Corollary}[Theorem]{Corollary} 
\newtheorem{Conjecture}[Theorem]{Conjecture} 
\newtheorem{Example}[Theorem]{Example} 
\newtheorem{Proposition}[Theorem]{Proposition} 
\newtheorem{Claim}[Theorem]{Claim}
\theoremstyle{definition}
\newcommand{\Gaa}{\sf cricket} 
\newcommand{\Gab}{\sf dart} 
\begin{document} 


\title[Graphs with two trivial critical ideals]{Graphs with two trivial critical ideals}


\author{Carlos A.  Alfaro}
\thanks{Both authors were partially supported by CONACyT grant 166059, the first author was partially supported by CONACyT and the second author was partially supported by SNI}

\author{Carlos E. Valencia}
\address{
Departamento de
Matem\'aticas\\
Centro de Investigaci\'on y de Estudios Avanzados del
IPN\\
Apartado Postal
14--740 \\
07000 Mexico City, D.F. 
} 
\email{alfaromontufar@gmail.com and cvalencia@math.cinvestav.edu.mx, respectively.}

\keywords{Critical ideal, Critical group, Generalized Laplacian matrix, Forbidden induced subgraph.}
\subjclass[2000]{Primary 05C25; Secondary 05C50, 05E99.} 


\maketitle

\begin{abstract}
The critical ideals of a graph are the determinantal 
ideals of the generalized Laplacian matrix associated to a graph.
A basic property of the critical ideals of graphs asserts that 
the graphs with at most $k$ trivial critical ideals, 
$\Gamma_{\leq k}$, are closed under induced subgraphs. 
In this article we find the set of minimal forbidden subgraphs for $\Gamma_{\leq 2}$, 
and we use this forbidden subgraphs to get a classification of the graphs in $\Gamma_{\leq 2}$.
As a consequence we give a classification of the simple graphs whose critical group has two invariant factors equal to one.
At the end of this article we give two infinite families of forbidden subgraphs.
\end{abstract}

\section{Introduction}
Given a connected graph $G=(V(G),E(G))$ and a set of indeterminates $X_G=\{x_u \, | \, u\in V(G)\}$, 
the {\it generalized Laplacian matrix} $L(G,X_G)$ of $G$
is the matrix with rows and columns indexed by the vertices of $G$ given by
\[
L(G,X_G)_{u v}=
\begin{cases}
	x_u & \text{ if } u=v,\\
	-m_{u v} & \text{ otherwise},
	\end{cases}
\]
where $m_{u v}$ is the {\it multiplicity} of the edge $uv$, that is, the number of the edges between vertices $u$ and $v$ of $G$.
For all $1\leq i \leq n$, the $i$-{\it critical ideal} of $G$ is the determinantal ideal given by
\[
I_i(G,X_G)=\langle  \{ {\rm det} (m) \, | \, m \text{ is a square submatrix of }L(G,X_G) \text{ of size } i\}\rangle\subseteq \mathbb{Z}[X_G].
\]
We say that a critical ideal is trivial when it is equal to $\langle 1 \rangle$.

Critical ideals are a generalization of the characteristic polynomials of the adjacency matrix and the Laplacian matrix associated to a graph.
Also, critical ideals generalize the critical group of a graph as shown below:
if $d_{G}(u)$ is the degree of a vertex $u$ of $G$, then 
the {\it Laplacian matrix} of $G$, denoted by $L(G)$, is the evaluation of $L(G,X_G)$ on $x_u=d_G(u)$.
Given a vertex $s$ of $G$, the {\it reduced Laplacian matrix} of $G$, denoted by $L(G,s)$, is the matrix obtained from $L(G)$ by removing the row and column $s$.
The {\it critical group} of a connected graph $G$, denoted by $K(G)$, is the cokernel of $L(G,s)$.
That is, 
\[
K(G)=\mathbb{Z}^{\widetilde{V}}/{\rm Im}\, L(G,s),
\]
where $\widetilde{V}=V(G)\setminus s$.
Therefore the critical group of a graph can be obtained from their critical ideals as shows~\cite[theorems 3.6 and 3.7]{corrval}.
The critical group have been studied intensively on several contexts over the last 30 years. 
However, a well understanding of the combinatorial and algebraic nature of the critical group still remains.
				
Let assume that $G$ is a connected graph with $n$ vertices.
A classical result (see \cite[section 3.7]{jacobson}) asserts that the reduced Laplacian matrix is equivalent to a 
integer diagonal matrix with entries $d_1, d_2, ..., d_{n-1}$ where $d_i> 0$ and $d_i \mid d_j$ if $i\leq j$.
The integers $d_1, \dots, d_{n-1}$ are unique and are called {\it invariant factors}. 
With this in mind, the critical group is described in terms of the invariant factors as follows \cite[theorem 1.4]{lorenzini1989}:
\[
K(G)\cong \mathbb{Z}_{d_1} \oplus \mathbb{Z}_{d_2} \oplus \cdots \oplus\mathbb{Z}_{d_{n-1}}.
\]
Given an integer number $k$, let $f_k(G)$ be the number of invariant factors of the Laplacian matrix of $G$ equal to $k$.
Let $\mathcal{G}_i=\{G\, |\, G \text{ is a simple connected graph with } f_1(G)=i\}$.
The study and characterization of $\mathcal G_i$ is of great interest.
In particular, some results and conjectures on the graphs with cyclic critical group can be found in \cite[section 4]{lorenzini2008} and \cite[conjectures 4.3 and 4.4]{wagner}.
On the other hand, Dino Lorenzini, notice in \cite{lorenzini1991} that $\mathcal G_1$ consists only of the complete graphs.
More recently, Merino in \cite{merino} posed interest on the characterization of $\mathcal G_2$ and $\mathcal G_3$. 
In this sense, few attempts have done.
For instance, in \cite{pan} it was characterized the graphs in $\mathcal G_2$ whose third invariant factor is equal to $n$, $n-1$, $n-2$, or $n-3$.
In \cite{chan} the characterizations of the graphs in $\mathcal G_2$ with a cut vertex, 
and the graphs in $\mathcal G_2$ with number of independent cycles equal to $n-2$ are given.

\medskip

If $\Gamma_{\leq i}$ denotes the family of graphs with at most $i$ trivial critical ideals,
then it is not difficult to see that $\mathcal{G}_i\subseteq \Gamma_{\leq i}$ for all $i\geq 0$.
At first glance, critical ideals behave better than critical ideals.
For instance, by~\cite[proposition 3.3]{corrval} we have that $\Gamma_{\leq i}$ is closed under induced subgraphs at difference of $\mathcal{G}_i$.
This property will play a crucial role in order to get a characterization of $\Gamma_{\leq 2}$ on this paper.
Also, if $\Gamma_{i}$ is the family of graphs with exactly $i$ trivial critical ideals,
then we will shown on this paper that $\Gamma_{2}$ has a more simple description that $\mathcal{G}_2$.

\medskip

The main goals of this paper are three: to get a characterization of the graphs with at most two trivial critical ideals,
to get a characterization of the graphs with two invariant factors equal to one, and to give two infinite families of forbidden subgraphs for $\Gamma_{\leq i}$.

This article is divided as follows:
We begin by recalling some basic concepts on graph theory in section 2 and establishing some of basic properties of critical ideals in section 3.
In section 4 we will characterize the graphs with at most two trivial critical ideals by finding their minimal set of forbidden graphs. 
As consequence, we will get the characterization of the graphs with two invariant factors equal to one.
Finally, in section 5 we give two infinite families of forbidden graphs for $\Gamma_{\leq i}$.

\section{Basic definitions}
In this section, we give some basic definitions and notation of graph theory used in later sections.
It should be pointed that we will consider the natural number as the the non-negative integers.

Given a graph $G=(V,E)$ and a subset $U$ of $V$, the subgraph of $G$ {\it induced} by $U$ will be denoted by $G[U]$.
If $u$ is a vertex of $G$, let $N_G(u)$ be the set of {\it neighbors} of $u$ in $G$.
Here a {\it clique} of a graph $G$ is a maximum complete subgraph, and its order is the {\it clique number} of $G$, denoted by $\omega(G)$.
The {\it path} with $n$ vertices is denoted by $P_n$, a {\it matching} with $k$ edges by $M_k$, 
the {\it complete graph} with $n$ vertices by $K_n$ and the {\it trivial graph} of $n$ vertices by $T_n$.
The {\it cone} of a graph $G$ is the graph obtained from $G$ by adding a new vertex, called {\it appex}, which is adjacent to each vertex of $G$.
The cone of a graph $G$ is denoted by $c(G)$.
Thus, the {\it star} $S_k$ of $k+1$ vertices is equal to $c(T_k)$.
Given two graphs $G$ and $H$, their {\it union} is denoted by $G\cup H$, and their {\it disjoint union} by $G+H$.
The {\it join} of $G$ and $H$, denoted by $G\vee H$, is the graph obtained from $G+H$ when we add all the edges between vertices of $G$ and $H$.
For $m,n,o\geq 1$, let $K_{m,n,o}$ be the {\it complete tripartite graph}. 
You can consult~\cite{diestel} for any unexplained concept of graph theory.

Let $M\in M_{n}(\mathbb{Z})$ be a $n\times n$ matrix with entries on 
$\mathbb{Z}$, $\mathcal{I}=\{i_1,\ldots,i_r\}\subseteq \{1,\ldots,n\}$, and $\mathcal{J}=\{j_1,\ldots,j_s\}\subseteq \{1,\ldots,n\}$.
The submatrix of $M$ formed by rows $i_1,\ldots,i_r$ and columns $j_1,\ldots,j_s$ is denoted by $M[\mathcal{I};\mathcal{J}]$.
If $|\mathcal{I}|=|\mathcal{J}|=r$, then $M[\mathcal{I};\mathcal{J}]$ is called a {\it $r$-square submatrix} or a {\it square submatrix} of size $r$ of $M$.
A {\it $r$-minor} is the determinant of a $r$-square submatrix.
The set of $i$-minors of a matrix $M$ will be denoted by ${\rm minors}_i(M)$.   
Finally, the identity matrix of size $n$ is denoted by $I_n$ and the all ones $m\times n$ matrix is denoted by $J_{m,n}$.
We say that $M,N \in M_{n}(\mathbb{Z})$ 
are {\it equivalent}, denoted by $N\sim M$, if there exist $P,Q\in GL_n(\mathbb{Z})$ such that $N=PMQ$.
Note that if $N\sim M$, then $K(M)=\mathbb{Z}^n/M^t\mathbb{Z}^n\cong\mathbb{Z}^n/N^t\mathbb{Z}^n=K(N)$.

\section{Graphs with few trivial critical ideals}
In this section, we will introduce the critical ideals of a graph and the set of 
graphs with $k$ or less trivial critical ideals, denoted by $\Gamma_{\leq k}$.
After that, we define the set of minimal forbidden graphs of $\Gamma_{\leq k}$.
We finish this section with the classification of $\mathcal G_1$, that we already know that they are the complete graphs.

Let $G$ be a graph and $X_G=\{x_v \mid v\in V(G)\}$ be the set of indeterminates indexed by the vertices of $G$.
For all $1\leq i \leq n$, the {\it $i$-critical ideal} $I_i(G, X_G)$ is defined as the ideal of $\mathbb{Z}[X_G]$ given by
\[
I_i(G,X_G)=\langle  \{ {\rm det} (m) \, | \, m \text{ is a square matrix of }L(G,X_G) \text{ of size } i\}\rangle.
\]
By convention $I_i(G,X_G)=\left< 1\right>$ if $i<1$, and $I_i(G,X_G)=\left< 0\right>$ if $i>n$.
The {\it algebraic co-rank} of $G$, denoted by $\gamma(G)$, is the number of critical ideals of $G$ equal to $\left<1\right>$.

\begin{Definition}
For all $k\in \mathbb{N}$, let $\Gamma_{\leq k}=\{ G\, | \, G \text{ is a simple connected graph with } \gamma(G)\leq k \}$ 
and $\Gamma_{\geq k}=\{ G\, | \, G \text{ is a simple connected graph with }  \gamma(G)\geq k \}$.
\end{Definition}

Note that, $\Gamma_{\leq k}$ and $\Gamma_{\geq k+1}$ are disjoint sets and that a characterization 
of one of them leads to a characterization of the other one.
Now, let us recall some basic properties about critical ideals, see \cite{corrval} for details.
It is known that if $i\leq j$, then $I_j(G,X_G) \subseteq I_i(G,X_G)$.
Moreover, if $H$ is an induced subgraph of $G$, then $I_i(H,X_H)\subseteq I_i(G,X_G)$ 
for all $i\leq |V(H)|$ and therefore $\gamma(H)\leq \gamma(G)$.
This implies that $\Gamma_{\leq k}$ is closed under induced subgraphs, that is, 
if $G\in \Gamma_{\leq k}$ and $H$ is an induced subgraph of $G$, then $H\in \Gamma_{\leq k}$.

\begin{Definition}
Let $f_k(G)$ be the number of invariant factors of $K(G)$ that are equal to $k$ and
\[
\mathcal{G}_i=\{G\, |\, G \text{ is a simple connected graph with } f_1(G)=i\}.
\]
\end{Definition}

Presumably $\Gamma_{\leq k}$ behaves better than $\mathcal G_k$.
It is not difficult to see that unlike of $\Gamma_{\leq k}$, $\mathcal G_k$ is not closed under induced subgraphs.
For instance, considerer $c(S_3)$, clearly it belongs to $\mathcal G_2$, but $S_3$ belongs to $\mathcal G_3$. 
Similarly, $K_6\setminus \{ 2P_2\}$ belongs to $\mathcal G_3$ meanwhile $K_5\setminus \{ 2P_2\}$ belongs to $\mathcal G_2$.
Moreover, if $H$ is an induced subgraph of $G$, it is not always true that $K(H)\trianglelefteq K(G)$.
For example, $K(K_4)\cong \mathbb{Z}_4^2\ntrianglelefteq K(K_5)\cong \mathbb{Z}_5^3$.
Finally, theorems \ref{teo:main1} and \ref{teo:main2} gives us additional evidence 
in the sense that  $\Gamma_{\leq k}$ behaves better than $\mathcal G_k$.
Moreover, theorem 3.6 of ~\cite{corrval} implies that $\gamma(G)\leq f_1(G)$ for any graph
and therefore $\mathcal{G}_k \subseteq \Gamma_{\leq k}$ for all $k\geq 0$.

\medskip

A graph $G$ is \textit{forbidden} (or an \textit{obstruction}) for $\Gamma_{\leq k}$ if and only if $\gamma(G)\geq k+1$.
Let ${\bf Forb}(\Gamma_{\leq k})$ be the set of minimal (under induced subgraphs property) forbidden graphs for $\Gamma_{\leq k}$.
Also, a graph $G$ is called $\gamma$-\textit{critical} if $\gamma(G\setminus v)< \gamma(G)$ for all $v\in V(G)$.
That is, $G\in {\bf Forb}(\Gamma_{\leq k})$ if and only if $G$ is $\gamma$-critical with $\gamma(G)=k+1$.

Given a family of graphs $\mathcal F$, a graph $G$ is called $\mathcal F$-free if no induced subgraph of $G$ is isomorphic to a member of $\mathcal F$.
Thus, 
$G$ belongs to $\Gamma_{\leq k}$ if and only if $G$ is ${\bf Forb}(\Gamma_{\leq k})$-free, or equivalently,
$G$ belongs to $\Gamma_{\geq k+1}$ if and only if $G$ contains a graph of ${\bf Forb}(\Gamma_{\leq k})$ as an induced subgraph.

These ideas are useful to characterize $\Gamma_{\leq k}$.
For instance, since $\gamma(P_2)=1$ and no one of its induced subgraphs has $\gamma \geq 1$, then $P_2\in {\bf Forb}(\Gamma_{\leq 0})$.
Moreover, it is easy to see that $T_1$ is the only connected graph that is $P_2$-free.
Thus, since $I_1(T_1,\{ x\})\neq \left< 1\right>$, we get that ${\bf Forb}(\Gamma_{\leq 0})=\{ P_2\}$, 
and $\Gamma_{\leq 0}$ consists of the graph with one vertex.
Also, it is not difficult to prove that $\mathcal G_0=\Gamma_{\leq 0}$ and
that the set of non-necessarily connected graphs with algebraic co-rank equal to zero consists only of the trivial graphs.
In the next section we will use this kind of arguments in order to get ${\bf Forb}(\Gamma_{\leq k})$
and characterize $\Gamma_{\leq k}$ for $k$ equal to $1$ and $2$.
Finally, section \ref{sec:For} will be devoted to explore in general the set ${\bf Forb}(\Gamma_{\leq k})$. 

Now, we obtain the characterization of $\Gamma_{\leq 1}$.

\begin{Theorem}\label{teo:gamma1}
If $G$ is a simple connected graph, then the following statements are equivalent:
\begin{enumerate}[(i)]
	\item $G \in \Gamma_{\leq 1}$,
	\item $G$ is $P_3$-free,
	\item $G$ is a complete graph.
\end{enumerate}
\end{Theorem}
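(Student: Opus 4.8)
The plan is to prove the three statements equivalent by establishing the cycle (i) $\Rightarrow$ (ii) $\Rightarrow$ (iii) $\Rightarrow$ (i). The implication (i) $\Rightarrow$ (ii) I would argue by contrapositive, using the monotonicity $\gamma(H)\leq\gamma(G)$ for induced subgraphs recalled above. The core is the computation $\gamma(P_3)=2$: writing $L(P_3,X_{P_3})$ for the path $1$--$2$--$3$, the off-diagonal entry $-1$ shows $I_1=\langle 1\rangle$, and the $2$-minor on rows $\{1,2\}$ and columns $\{2,3\}$ is $\det\left(\begin{smallmatrix}-1&0\\x_2&-1\end{smallmatrix}\right)=1$, so $I_2=\langle 1\rangle$ as well, while $\det L(P_3,X_{P_3})=x_1x_2x_3-x_1-x_3$ is not a unit, giving $I_3\neq\langle 1\rangle$ and hence $\gamma(P_3)=2$. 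Consequently, if $G$ contains an induced $P_3$ then $\gamma(G)\geq\gamma(P_3)=2$ and $G\notin\Gamma_{\leq 1}$; equivalently every $G\in\Gamma_{\leq 1}$ is $P_3$-free.

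For (ii) $\Rightarrow$ (iii) I would invoke the elementary fact that a connected graph is $P_3$-free if and only if it is complete: if $G$ is connected but not complete, pick two non-adjacent vertices and take a shortest path joining them; its first three vertices induce a $P_3$. This step is purely combinatorial and mirrors the argument that $T_1$ is the only connected $P_2$-free graph used for $\Gamma_{\leq 0}$.

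The substantive step is (iii) $\Rightarrow$ (i), where I would show $\gamma(K_n)=1$ for every $n\geq 2$ (with $K_1$ having $\gamma=0$, and $K_2=P_2$ having $\gamma=1$ both immediate). Again $I_1(K_n,X_{K_n})=\langle 1\rangle$ since the off-diagonal entries are $-1$. The point is that $I_2$ is \emph{proper} for $n\geq 3$. The $2$-minors on rows $\{i,j\}$ and columns $\{j,k\}$ with $i,j,k$ distinct equal $\det\left(\begin{smallmatrix}-1&-1\\x_j&-1\end{smallmatrix}\right)=x_j+1$, and since $x_ix_j-1=x_j(x_i+1)-(x_j+1)$, every diagonal $2$-minor already lies in $\langle x_v+1 : v\in V(K_n)\rangle$; one checks the remaining minors (those mixing a diagonal and off-diagonal column, or purely off-diagonal, which give $-(x_i+1)$ or $0$) lie there too, so $I_2(K_n,X_{K_n})=\langle x_v+1 : v\in V(K_n)\rangle$. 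Evaluating at $x_v=-1$ for all $v$ shows this ideal is proper, hence $I_2\neq\langle 1\rangle$ and $\gamma(K_n)=1$, giving $K_n\in\Gamma_{\leq 1}$ and closing the cycle.

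The main obstacle is this last computation: one must identify $I_2(K_n,X_{K_n})$ exactly and confirm it is proper, since a careless bound could wrongly produce a unit from among the many $2$-minors. Once $I_2=\langle x_v+1 : v\rangle$ is pinned down, properness follows from the single evaluation $x_v=-1$, and the remainder of the theorem is formal. An equivalent packaging of the whole argument is to observe that it identifies ${\bf Forb}(\Gamma_{\leq 1})=\{P_3\}$, so that $\Gamma_{\leq 1}$ is exactly the class of connected $P_3$-free graphs, which by the graph-theoretic step is the class of complete graphs.
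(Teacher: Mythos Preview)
Your proposal is correct and follows essentially the same cycle $(i)\Rightarrow(ii)\Rightarrow(iii)\Rightarrow(i)$ as the paper, using $\gamma(P_3)=2$ together with monotonicity for the first implication, the shortest-path argument for the second, and the computation $I_2(K_n,X_{K_n})=\langle x_v+1:v\in V(K_n)\rangle$ (for $n\geq 3$) for the third. You actually supply a bit more detail than the paper does: an explicit $2$-minor witnessing $I_2(P_3)=\langle 1\rangle$, and the evaluation $x_v\mapsto -1$ to confirm that $I_2(K_n)$ is proper, whereas the paper simply states the generators of $I_2(K_n)$ and concludes $\gamma(K_n)\leq 1$.
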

\begin{proof}
$(i)\Rightarrow (ii)$ Since $\gamma(P_3)=2$,  then clearly $G$ must be $P_3$-free.

$(ii)\Rightarrow (iii)$
If $G$ is not a complete graph, then it has two vertices not adjacent, say $u$ and $v$.
Let $P$ be the smallest path between $u$ and $v$.
Thus, the length of $P$ is greater or equal to $3$. 
So, $P_3$ is an induced subgraph of $P$ and hence of $G$.
Therefore, $G$ is a complete graph.

$(iii)\Rightarrow (i)$ It is easy to see that for any non-trivial simple connected graph, its first critical ideal is trivial, meanwhile $I_1(K_1,\{ x\}) =\left< x \right>$. 
On the other hand, the 2-minors of a complete graphs are of the forms: $-1 + x_ix_j$ and $1+ x_i$.
Since $-1 + x_ix_j\in \left< 1+ x_1, ..., 1+x_n \right>$, then
\begin{equation}\label{eqn:gamma1}
	I_2(K_n,X_{K_n})=
	\begin{cases}
		\left< -1 + x_1x_2 \right> & \text{if } n=2, \text{ and,}\\
		\left< 1+ x_1, ..., 1+x_n \right> & \text{if } n\geq 3.
	\end{cases}
\end{equation}
Therefore $\gamma(K_n)\leq 1$.
In fact, the set $\{ 1+ x_1, ..., 1+x_n\}$ is a reduced Gr\"obner basis for $I_2(K_n,X_{K_n})$, see \cite[theorem 3.14]{corrval}.
\end{proof}

In light of theorem~\ref{teo:gamma1}, the characterization of $\mathcal G_1$ is as follows:
Clearly, $\mathcal G_1\subseteq \Gamma_{\leq 1}\setminus\mathcal G_{0}$.
Now, let $G\in \Gamma_{\leq 1}\setminus\{ K_1\}$, that is, $G=K_n$ with $n\geq2$ and $f_1(G)\geq 1$.
It is easy to verify from equation \ref{eqn:gamma1} that the second invariant factor of $K(G)$ is equal to
$I_2(K_n,X_{K_n})\mid _{\{x_v=n-1\, | \, v \in K_n\}}$ which is different to $1$. 

\begin{Corollary}\cite{lorenzini1991}
If $G$ is a simple connected graph with $n\geq 2$ vertices, then $f_1(G)=1$ if and only if $G$ is a complete graph.
\end{Corollary}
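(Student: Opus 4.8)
The plan is to read off the statement from Theorem~\ref{teo:gamma1}, the inequality $\gamma(G)\le f_1(G)$, and a direct evaluation of the first two critical ideals at the degree sequence. For the forward implication, suppose $f_1(G)=1$, so that $G\in\mathcal G_1$. By~\cite[theorem 3.6]{corrval} we have $\gamma(G)\le f_1(G)=1$, hence $G\in\Gamma_{\le 1}$, and Theorem~\ref{teo:gamma1} then forces $G$ to be a complete graph. Since the critical group of $K_1$ has no invariant factors, $f_1(K_1)=0$, so $G\neq K_1$ and therefore $G=K_n$ with $n\ge 2$.

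For the reverse implication I would show directly that $f_1(K_n)=1$ for every $n\ge 2$. The tool is the correspondence of~\cite[theorems 3.6 and 3.7]{corrval} between the critical ideals and the invariant factors of $K(G)$: evaluating $I_i(G,X_G)$ at $x_v=d_G(v)$ produces the $i$-th determinantal divisor of the Laplacian, so the positive generator of $I_i(K_n,X_{K_n})\mid_{\{x_v=n-1\}}$ equals the product $d_1\cdots d_i$ of the first $i$ invariant factors of $K(K_n)$. Because $I_1(K_n,X_{K_n})=\langle 1\rangle$ for $n\ge 2$, we get $d_1=1$, consistent with $\gamma(K_n)\le f_1(K_n)$. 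To pin down $f_1(K_n)=1$ exactly I would use~\eqref{eqn:gamma1}: for $n\ge 3$, evaluating the generators $1+x_i$ of $I_2(K_n,X_{K_n})$ at $x_v=n-1$ gives $\langle n\rangle$, so $d_1 d_2=n$ and hence $d_2=n>1$; since $d_2\mid d_3\mid\cdots$, no later invariant factor can equal $1$, giving $f_1(K_n)=1$. The case $n=2$ is handled separately, since the generator $-1+x_1x_2$ of $I_2(K_2,X_{K_2})$ vanishes at $x_1=x_2=1$; here $K(K_2)$ is trivial with a single invariant factor $d_1=1$, so again $f_1(K_2)=1$.

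The main difficulty is not combinatorial but lies in the bookkeeping linking the evaluated critical ideals to the Smith normal form of the reduced Laplacian: one must verify that the positive generator of $I_i(K_n,X_{K_n})\mid_{\{x_v=n-1\}}$ is genuinely the determinantal divisor $d_1\cdots d_i$, and check that the singular behaviour of the Laplacian at the top index $i=n$ does not interfere for the small indices $i\le 2$ that the argument uses. Once this identification is granted, the explicit forms in~\eqref{eqn:gamma1} make the computation immediate and close both directions.
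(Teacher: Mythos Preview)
Your proposal is correct and follows essentially the same route as the paper: the forward direction uses $\gamma(G)\le f_1(G)$ together with Theorem~\ref{teo:gamma1}, and the reverse direction evaluates the generators in~\eqref{eqn:gamma1} at $x_v=n-1$ to see that the second invariant factor is not $1$. Your write-up is in fact more careful than the paper's, since you make explicit that the evaluation yields the determinantal divisor $d_1d_2$ rather than $d_2$ itself, and you handle the edge case $n=2$ separately.
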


A crucial fact in the proof of theorem \ref{teo:gamma1} was that $P_3$ belongs to ${\bf Forb}(\Gamma_{\leq 1})$
and the fact that any other connected simple graph belonging to $\Gamma_{\geq2}$ contains $P_3$.
This leads to the following corollary.
\begin{Corollary}
${\bf Forb}(\Gamma_{\leq 1})=\{ P_3\}.$
\end{Corollary}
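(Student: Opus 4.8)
The plan is to read this off directly from Theorem~\ref{teo:gamma1}, which already pins down $\Gamma_{\leq 1}$ as the class of complete graphs and singles out $P_3$ as the governing obstruction. Recall that by the definition preceding the corollary, $G\in{\bf Forb}(\Gamma_{\leq 1})$ holds precisely when $G$ is $\gamma$-critical with $\gamma(G)=2$; equivalently, $G$ is forbidden for $\Gamma_{\leq 1}$ and no \emph{proper} induced subgraph of $G$ is forbidden. So the task splits into showing that $P_3$ is such a minimal obstruction and that nothing else is.

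First I would verify that $P_3\in{\bf Forb}(\Gamma_{\leq 1})$. We already know $\gamma(P_3)=2$ from the proof of Theorem~\ref{teo:gamma1}, so $P_3$ is forbidden. For minimality it is enough to delete each vertex and check that the co-rank drops: removing an endpoint leaves $P_2=K_2$, a complete graph with $\gamma(P_2)=1$ by Theorem~\ref{teo:gamma1}, while removing the central vertex leaves the two isolated vertices $T_2$, whose generalized Laplacian is $\mathrm{diag}(x_1,x_2)$ and hence has $I_1=\langle x_1,x_2\rangle\neq\langle 1\rangle$, giving $\gamma(T_2)=0$. Both values are strictly below $2$, so $P_3$ is $\gamma$-critical.

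Next I would show $P_3$ is the only member. Let $G\in{\bf Forb}(\Gamma_{\leq 1})$, so $\gamma(G)\geq 2$ and thus $G\notin\Gamma_{\leq 1}$. By Theorem~\ref{teo:gamma1}, $G$ is not complete, and the contrapositive of the implication $(iii)\Rightarrow(ii)$ shows that $G$ contains $P_3$ as an induced subgraph. Since $P_3$ is itself forbidden, the minimality of $G$ forces $G=P_3$: otherwise $P_3$ would be a proper forbidden induced subgraph of $G$. Combining both directions yields ${\bf Forb}(\Gamma_{\leq 1})=\{P_3\}$.

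There is no real obstacle here; the statement merely repackages Theorem~\ref{teo:gamma1}, and the only point needing care is the minimality check. In particular one must note that deleting the center of $P_3$ produces a \emph{disconnected} graph, so the co-rank there is computed directly from the critical ideals of $T_2$ rather than by invoking the connected classification, but the monotonicity $\gamma(H)\leq\gamma(G)$ under induced subgraphs holds for arbitrary subgraphs and keeps everything consistent.
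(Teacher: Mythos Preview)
Your argument is correct and mirrors the paper's own reasoning, which simply observes that the proof of Theorem~\ref{teo:gamma1} already established both that $P_3$ is a minimal obstruction and that every connected graph in $\Gamma_{\geq 2}$ contains an induced $P_3$. One small slip: you want the contrapositive of $(ii)\Rightarrow(iii)$ (``$P_3$-free $\Rightarrow$ complete''), not of $(iii)\Rightarrow(ii)$, though since Theorem~\ref{teo:gamma1} gives the full equivalence this does not affect the validity of your proof.
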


Next corollary give us the non-connected version of theorem~\ref{teo:gamma1}.

\begin{Corollary}\label{cor:gamma1}
If $G$ is a simple non-necessary connected graph, then the following statements are equivalent:
\begin{enumerate}[(i)]
\item $\gamma(G)\leq 1$,
\item $G$ is $\{P_3, 2P_2\}$-free,
\item $G$ is a disjoint union of a complete graph and a trivial graph.
\end{enumerate}
\end{Corollary}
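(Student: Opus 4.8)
The plan is to establish the cycle $(i)\Rightarrow(ii)\Rightarrow(iii)\Rightarrow(i)$, reducing the non-connected statement to the connected case of Theorem~\ref{teo:gamma1} component by component, with $2P_2$ serving as the extra obstruction that forces all but one component to be trivial. For $(i)\Rightarrow(ii)$ I would invoke the monotonicity $\gamma(H)\leq\gamma(G)$ for induced subgraphs, which is stated for arbitrary graphs. Since $\gamma(P_3)=2$ is already recorded, it only remains to check that $\gamma(2P_2)\geq2$. Writing the generalized Laplacian of $2P_2$ as the block matrix with diagonal blocks $\left(\begin{smallmatrix} x_1 & -1 \\ -1 & x_2\end{smallmatrix}\right)$ and $\left(\begin{smallmatrix} x_3 & -1 \\ -1 & x_4\end{smallmatrix}\right)$, the off-diagonal entry $-1$ gives $I_1(2P_2,X)=\langle1\rangle$, while the $2$-minor taken on the rows of the first edge and the columns of the second edge equals $(-1)(-1)=1$, so $I_2(2P_2,X)=\langle1\rangle$ as well; hence $\gamma(2P_2)\geq2$. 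Consequently any $G$ with $\gamma(G)\leq1$ can contain neither $P_3$ nor $2P_2$ as an induced subgraph, which is exactly $(ii)$.

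For $(ii)\Rightarrow(iii)$, each connected component of $G$ is connected and $P_3$-free, so by the equivalence $(ii)\Leftrightarrow(iii)$ of Theorem~\ref{teo:gamma1} every component is complete; thus $G$ is a disjoint union of complete graphs. If two distinct components each contained an edge, choosing one edge from each would induce a copy of $2P_2$, contradicting $2P_2$-freeness. Hence at most one component is a $K_m$ with $m\geq2$ and all remaining components are isolated vertices, so $G=K_m+T_k$ for some $m\geq1$ and $k\geq0$.

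Finally, for $(iii)\Rightarrow(i)$ the key idea is a rank-dropping specialization. Consider the ring homomorphism $\phi\colon\mathbb{Z}[X_G]\to\mathbb{Z}$ sending $x_v\mapsto-1$ when $v$ lies in the complete part $K_m$ and $x_v\mapsto0$ when $v$ lies in the trivial part $T_k$. Then $\phi(L(G,X_G))$ is block diagonal, with its $K_m$-block equal to the all-$(-1)$ matrix $-J_{m,m}$ and its $T_k$-block equal to the zero matrix, so the specialized matrix has rank at most $1$ and all of its $2\times2$ minors vanish. Since $\phi$ commutes with forming determinants of submatrices, $\phi$ annihilates every $2$-minor of $L(G,X_G)$, whence $\phi(I_2(G,X_G))=\langle0\rangle$ while $\phi(\langle1\rangle)=\langle1\rangle$; therefore $1\notin I_2(G,X_G)$. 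Because $I_1\supseteq I_2\supseteq\cdots$, this shows $I_j(G,X_G)\neq\langle1\rangle$ for all $j\geq2$, so at most the first critical ideal is trivial and $\gamma(G)\leq1$.

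The only content beyond Theorem~\ref{teo:gamma1} is isolating $2P_2$ as the second obstruction and giving a uniform treatment of $(iii)\Rightarrow(i)$. I expect the latter to be the step requiring the most care to phrase cleanly: the neat point is that exhibiting a single specialization under which the generalized Laplacian collapses to rank one immediately forces $I_2$ to be nontrivial, bypassing any explicit Gr\"obner basis computation and handling the trivial-graph case $m=1$ uniformly.
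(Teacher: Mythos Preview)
Your argument is correct, modulo one slip in wording. In the step $(i)\Rightarrow(ii)$, the $2\times 2$ submatrix of $L(2P_2,X)$ obtained from ``the rows of the first edge and the columns of the second edge'' (i.e.\ rows $1,2$ and columns $3,4$) is the zero matrix, not one with determinant $1$. What your computation $(-1)(-1)=1$ actually reflects is the submatrix on, say, rows $\{1,3\}$ and columns $\{2,4\}$, which picks up one off-diagonal $-1$ from each block and gives $\det\left(\begin{smallmatrix}-1&0\\0&-1\end{smallmatrix}\right)=1$. With that correction the step goes through; the paper simply records $\gamma(2P_2)=2$, which also follows at once from the additivity $\gamma(G+H)=\gamma(G)+\gamma(H)$ of Lemma~\ref{lemm:corrvaldisjoint}.

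Step $(ii)\Rightarrow(iii)$ matches the paper. Where your proof genuinely diverges is in $(iii)\Rightarrow(i)$: the paper invokes Lemma~\ref{lemm:corrvaldisjoint} to write $I_2(K_m+T_k)$ as the ideal generated by $I_2(K_m)$, $I_1(K_m)I_1(T_k)$, and $I_2(T_k)$, and then appeals to the explicit description of $I_2(K_m)$ in equation~\eqref{eqn:gamma1} to see this is not $\langle1\rangle$. Your specialization $x_v\mapsto-1$ on $K_m$, $x_v\mapsto0$ on $T_k$ collapses $L(G,X_G)$ to a matrix of rank at most one and kills every $2$-minor in one stroke, bypassing both the product formula and the prior $I_2(K_n)$ computation. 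The paper's route has the byproduct of actually identifying generators of $I_2(K_m+T_k)$; yours is shorter and self-contained, and the same specialization trick is a useful template whenever one only needs non-triviality of a critical ideal rather than an explicit description.
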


Before to proceed with the proof of corollary~\ref{cor:gamma1} present 
a lemma that help us to calculate the critical ideal of a non connected graph. 
It may be useful to recall that the {\it product} of the ideals $I$ and $J$ of a commutative ring $R$, 
which we denote by $IJ$, is the ideal generated by all the products $ab$ where $a\in I$ and $b\in J$. 
\begin{Lemma}\cite[Proposition 3.4]{corrval}\label{lemm:corrvaldisjoint}
If $G$ and $H$ are vertex disjoint graphs, then
\[
I_i(G+H,\{X_G,Y_H\})=\left< \cup_{j=0}^{i}I_j(G,X_G) I_{i-j}(H,Y_H) \right> \text{ for all }1\leq i\leq |V(G+H)|.
\]
\end{Lemma}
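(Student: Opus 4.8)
The plan is to exploit the block structure of the generalized Laplacian matrix of a disjoint union. Since $G$ and $H$ share no vertices and no edge of $G+H$ joins them, ordering the vertices so that those of $G$ precede those of $H$ displays the generalized Laplacian as a block-diagonal matrix
\[
L(G+H,\{X_G,Y_H\})=\begin{pmatrix} L(G,X_G) & 0 \\ 0 & L(H,Y_H) \end{pmatrix}.
\]
Writing $A=L(G,X_G)$, $B=L(H,Y_H)$ and $M=L(G+H,\{X_G,Y_H\})$, everything reduces to understanding the $i$-minors of a block-diagonal matrix, together with the bookkeeping convention $I_0(G,X_G)=\langle 1\rangle$ recorded above.

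First I would analyze an arbitrary $i$-square submatrix $M[\mathcal{I};\mathcal{J}]$. Splitting the index sets according to the two blocks as $\mathcal{I}=\mathcal{I}_A\cup\mathcal{I}_B$ and $\mathcal{J}=\mathcal{J}_A\cup\mathcal{J}_B$, the vanishing of the off-diagonal blocks of $M$ gives
\[
M[\mathcal{I};\mathcal{J}]=\begin{pmatrix} A[\mathcal{I}_A;\mathcal{J}_A] & 0 \\ 0 & B[\mathcal{I}_B;\mathcal{J}_B] \end{pmatrix}.
\]
If $|\mathcal{I}_A|>|\mathcal{J}_A|$, then in the Leibniz expansion of $\det M[\mathcal{I};\mathcal{J}]$ every nonzero term would require a bijection sending the $|\mathcal{I}_A|$ rows of the $A$-block into the $|\mathcal{J}_A|$ columns of the $A$-block (the only columns in which those rows have nonzero entries), which is impossible; hence the determinant vanishes, and symmetrically when $|\mathcal{I}_A|<|\mathcal{J}_A|$. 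Thus a minor survives only when $|\mathcal{I}_A|=|\mathcal{J}_A|=:j$, forcing $|\mathcal{I}_B|=|\mathcal{J}_B|=i-j$, and in that case $\det M[\mathcal{I};\mathcal{J}]=\det A[\mathcal{I}_A;\mathcal{J}_A]\cdot\det B[\mathcal{I}_B;\mathcal{J}_B]$ is the product of a $j$-minor of $L(G,X_G)$ and an $(i-j)$-minor of $L(H,Y_H)$ for some $0\le j\le i$.

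It then remains to convert this minor-level description into the claimed ideal identity, verifying both inclusions. For $\subseteq$, every generator of $I_i(G+H,\{X_G,Y_H\})$ is an $i$-minor of $M$, hence either zero or such a product, and any such product lies in $I_j(G,X_G)\,I_{i-j}(H,Y_H)$; thus $I_i(G+H,\{X_G,Y_H\})$ is contained in the right-hand ideal. For $\supseteq$, since $I_j(G,X_G)$ is generated by the $j$-minors of $A$ and $I_{i-j}(H,Y_H)$ by the $(i-j)$-minors of $B$, the product ideal $I_j(G,X_G)\,I_{i-j}(H,Y_H)$ is generated by the products of such minors (the conventions $I_0(G,X_G)=I_0(H,Y_H)=\langle 1\rangle$ covering the degenerate choices $j=0$ and $j=i$), and each such product is realized as an honest $i$-minor of $M$ by selecting the corresponding rows and columns of the two blocks. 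Taking the union over $0\le j\le i$ yields the stated equality. The only step demanding genuine care is the vanishing argument for mismatched block sizes: once it is pinned down that a surviving minor must draw equally many rows and columns from each block, the passage between minors and products of generators of the two ideals is routine.
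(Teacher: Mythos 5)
Your proof is correct: the block-diagonal structure of $L(G+H,\{X_G,Y_H\})$, the vanishing argument for minors drawing unequal numbers of rows and columns from a block, and the two ideal inclusions (with the convention $I_0=\langle 1\rangle$ handling $j=0,i$) together give exactly the stated identity. The paper itself supplies no proof, citing \cite[Proposition 3.4]{corrval}, and your argument is the standard one behind that proposition, so there is nothing to reconcile.
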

By this lemma we have that $\gamma(G+H)=\gamma(G)+\gamma(H)$ when $G$ and $H$ are vertex disjoint.

\begin{proof}[Proof of corollary~\ref{cor:gamma1}]
$(i)\Rightarrow (ii)$ 
It follows since $\gamma(2P_2)=2$ and $\gamma(P_3)=2$.

$(ii)\Rightarrow (iii)$
Let $G_1,\ldots,G_s$ be the connected components of $G$.
Then by theorem~\ref{teo:gamma1} and lemma~\ref{lemm:corrvaldisjoint}, $G_i$ is a complete graph for all $1\leq i\leq s$.
Since $2P_2$ must not be an induced subgraph of $G$, then at most one of the $G_i$ has order greater than $1$.

$(iii)\Rightarrow (i)$ 
If $G = K_n + T_m$, then it is not difficult to see that $I_1(T_m,Y_{T_m})=\left< y_1, ..., y_m\right>$ 
and $I_2(T_m,Y_{T_m})=\left< \prod_{i\neq j}y_i y_j \right>$.
Thus by lemma~\ref{lemm:corrvaldisjoint}, 
\[
I_2(G,\{ X_{K_n}, Y_{T_m}\})=\left< I_2(K_n,X_{K_n}), I_1(K_n,X_{K_n})I_1(T_m,Y_{T_m}), I_2(T_m,Y_{T_m})\right>\neq\left< 1\right>. \vspace{-9mm}
\]
\end{proof}

\section{Graphs with algebraic co-rank equal to two}
\label{gamma2}
The main goal of this section is to classify the simple graphs on $\Gamma_{\leq 2}$.
After that, using the fact that $\mathcal{G}_2\subseteq \Gamma_{\leq 2}$ we will classify the simple graphs
whose critical group has two invariant factors equal to $1$.
As in the case of $\Gamma_{\leq 1}$, the characterization of $\Gamma_{\leq 2}$ relies heavely
in the fact that $\Gamma_{\leq 2}$ is closed under induced subgraphs and the fact that we have a good
guessing about ${\bf Forb}(\Gamma_{\leq 2})$.
We begin with the introduction of a set of graphs in ${\bf Forb}(\Gamma_{\leq 2})$.

\begin{Proposition}\label{lem::main1}
Let $\mathcal F_2$ be the set of graphs consisting of $P_4$, $K_5\setminus S_2$, $K_6\setminus M_2$, $\Gaa$ and $\Gab$; see figure~\ref{fig2}.
Then $\mathcal{F}_2 \subseteq {\bf Forb}(\Gamma_{\leq 2})$.
\end{Proposition}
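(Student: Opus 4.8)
The plan is to invoke the characterization established immediately above the statement: $G \in {\bf Forb}(\Gamma_{\le 2})$ if and only if $G$ is $\gamma$-critical with $\gamma(G) = 3$. So for each of the five graphs $G \in \mathcal{F}_2$ I would verify two things separately: first that $\gamma(G) = 3$, and second that $\gamma(G \setminus v) < 3$ for every vertex $v$. Because $\Gamma_{\le 2}$ is closed under induced subgraphs, the second condition only has to be checked for one representative $v$ in each orbit of the automorphism group of $G$, which keeps the casework short (for instance $P_4$ has only two vertex orbits, and each of the five graphs has at most three).

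To establish $\gamma(G) = 3$ I would bound $\gamma$ from both sides. For the lower bound $\gamma(G) \ge 3$, i.e. $I_1 = I_2 = I_3 = \langle 1 \rangle$, note that $I_1 = \langle 1 \rangle$ is automatic since each of these graphs has an edge, so some off-diagonal entry of $L(G,X_G)$ equals $-1$; the triviality of $I_2$ and $I_3$ is then witnessed by exhibiting explicit $\mathbb{Z}[X_G]$-combinations of $2$- and $3$-minors equal to $1$, finite certificates read off $L(G,X_G)$ (or obtained from a reduced Gr\"obner basis as in \cite{corrval}). For the upper bound $\gamma(G) \le 3$ I must show $I_4(G,X_G) \ne \langle 1 \rangle$. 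For $P_4$ this is immediate: $I_4$ is principal, generated by $\det L(P_4,X)$, which is a nonconstant polynomial and hence not a unit. For the four remaining graphs I would produce a ring homomorphism $\varphi : \mathbb{Z}[X_G] \to k$ into a field $k$ (concretely, a specialization $x_v \mapsto a_v$ over a suitable $\mathbb{F}_p$ or over $\mathbb{Q}$) under which every $4$-minor of $L(G,X_G)$ vanishes; then $\varphi(I_4) = \langle 0 \rangle \ne k = \varphi(\langle 1 \rangle)$, forcing $I_4 \ne \langle 1 \rangle$. Equivalently, it suffices to exhibit a single point in the zero set of $I_4$.

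For the criticality condition I would identify each vertex-deleted subgraph with a graph whose algebraic co-rank is already under control. Deleting a vertex of $P_4$ yields $P_3$ (with $\gamma = 2$) or $P_2 + T_1$ (with $\gamma = 1 + 0 = 1$ by Lemma \ref{lemm:corrvaldisjoint}), both $\le 2$. For $K_5 \setminus S_2$ one deletion gives $K_4$, whence $\gamma = 1$ by Theorem \ref{teo:gamma1}, and the others give small $4$-vertex graphs; for $K_6 \setminus M_2$ the relevant deletions land on $K_5 \setminus M_1$ and on $K_5 \setminus M_2$, the latter already noted in the introduction to lie in $\mathcal{G}_2 \subseteq \Gamma_{\le 2}$; the {\Gaa} and {\Gab} are handled the same way, each deletion producing a graph that is complete, a disjoint union treated by Lemma \ref{lemm:corrvaldisjoint}, or a small graph whose third critical ideal is seen to be nontrivial by the same specialization argument as above. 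In every case one confirms $\gamma(G \setminus v) \le 2$, which is precisely $\gamma(G \setminus v) < \gamma(G)$.

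The main obstacle is the upper bound $I_4 \ne \langle 1 \rangle$, together with the analogous non-trivialities $I_3(G \setminus v) \ne \langle 1 \rangle$ in the criticality step: proving that an ideal is not the unit ideal is not a matter of displaying one combination but of certifying that no combination yields $1$, and the cleanest route is the specialization argument that pins down a common zero of all the relevant minors over a well-chosen field. The lower bounds are routine once the witnessing minor combinations are exhibited, so the genuine content of the proposition lies in these non-triviality verifications for the four graphs on five and six vertices.
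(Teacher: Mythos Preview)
Your plan is correct and matches the paper's approach: verify $\gamma(G)=3$ and $\gamma$-criticality for each $G\in\mathcal{F}_2$. Two small differences in execution are worth noting: for the lower bound the paper exhibits, for each graph, a single $3\times3$ submatrix of $L(G,X_G)$ whose determinant is $\pm1$ (so no $\mathbb{Z}[X_G]$-combinations or Gr\"obner bases are needed for $I_3=\langle1\rangle$), and for the upper bound $I_4\neq\langle1\rangle$ and the criticality checks the paper simply defers to computer verification (Macaulay~2) rather than the specialization and subgraph-identification arguments you sketch.
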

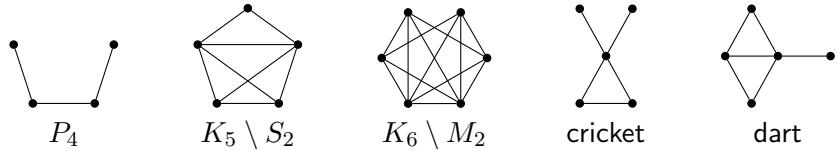
\begin{figure}[h]
\begin{center}
\begin{tabular}{c@{\extracolsep{10mm}}c@{\extracolsep{10mm}}c@{\extracolsep{10mm}}c@{\extracolsep{10mm}}c}
	\begin{tikzpicture}[scale=.7]
	\tikzstyle{every node}=[minimum width=0pt, inner sep=1pt, circle]
	\draw (126+36:1) node (v1) [draw,fill] {};
	\draw (198+36:1) node (v2) [draw,fill] {};
	\draw (270+36:1) node (v3) [draw,fill] {};
	\draw (342+36:1) node (v4) [draw,fill] {};
	\draw (v1) -- (v2);
	\draw (v2) -- (v3);
	\draw (v4) -- (v3);
	\end{tikzpicture}
&
	\begin{tikzpicture}[scale=.7]
	\tikzstyle{every node}=[minimum width=0pt, inner sep=1pt, circle]
	\draw (126-36:1) node (v1) [draw,fill] {};
	\draw (198-36:1) node (v2) [draw,fill] {};
	\draw (270-36:1) node (v3) [draw,fill] {};
	\draw (342-36:1) node (v4) [draw,fill] {};
	\draw (414-36:1) node (v5) [draw,fill] {};
	\draw (v1) -- (v2);
	\draw (v1) -- (v5);
	\draw (v2) -- (v3);
	\draw (v2) -- (v4);
	\draw (v2) -- (v5);
	\draw (v3) -- (v4);
	\draw (v3) -- (v5);
	\draw (v4) -- (v5);
	\end{tikzpicture}
&
	\begin{tikzpicture}[scale=.7]
	\tikzstyle{every node}=[minimum width=0pt, inner sep=1pt, circle]
	\draw (0:1) node (v1) [draw,fill] {};
	\draw (60:1) node (v2) [draw,fill] {};
	\draw (120:1) node (v3) [draw,fill] {};
	\draw (180:1) node (v4) [draw,fill] {};
	\draw (240:1) node (v5) [draw,fill] {};
	\draw (300:1) node (v6) [draw,fill] {};
	\draw (v1) -- (v2);
	\draw (v1) -- (v3);
	\draw (v1) -- (v5);
	\draw (v1) -- (v6);
	\draw (v2) -- (v4);
	\draw (v2) -- (v5);
	\draw (v2) -- (v6);
	\draw (v3) -- (v4);
	\draw (v3) -- (v5);
	\draw (v3) -- (v6);
	\draw (v4) -- (v5);
	\draw (v4) -- (v6);
	\draw (v5) -- (v6);
	\end{tikzpicture}
&
	\begin{tikzpicture}[scale=.7]
	\tikzstyle{every node}=[minimum width=0pt, inner sep=1pt, circle]
	\draw (-.5,-.9) node (v1) [draw,fill] {};
	\draw (.5,-.9) node (v2) [draw,fill] {};
	\draw (0,0) node (v3) [draw,fill] {};
	\draw (-.5,.9) node (v4) [draw,fill] {};
	\draw (.5,.9) node (v5) [draw,fill] {};
	\draw (v1) -- (v2);
	\draw (v1) -- (v3);
	\draw (v2) -- (v3);
	\draw (v3) -- (v4);
	\draw (v3) -- (v5);
	\end{tikzpicture}
&
	\begin{tikzpicture}[scale=.7]
	\tikzstyle{every node}=[minimum width=0pt, inner sep=1pt, circle]
	\draw (-.5,0) node (v2) [draw,fill] {};
	\draw (0,-.9) node (v1) [draw,fill] {};
	\draw (.5,0) node (v3) [draw,fill] {};
	\draw (1.5,0) node (v5) [draw,fill] {};
	\draw (0,.9) node (v4) [draw,fill] {};
	\draw (v1) -- (v2);
	\draw (v1) -- (v3);
	\draw (v2) -- (v3);
	\draw (v2) -- (v4);
	\draw (v3) -- (v4);
	\draw (v3) -- (v5);
	\end{tikzpicture}
\\
$P_4$
&
$K_5\setminus S_2$
&
$K_6\setminus M_2$
&
\Gaa
&
\Gab
\end{tabular}
\end{center}
\caption{The set $\mathcal{F}_2$ of graphs.}
\label{fig2}
\end{figure}
\begin{proof}
It is not difficult to see that the generalized Laplacian matrix of the graphs on $\mathcal{F}_2$ are given by:
\begin{center}
\begin{tabular}{l@{\extracolsep{12mm}}r}
$
\hspace{6.5mm}
L(P_4)
=
\left[
	\begin{array}{ccccc}
		  x_{1} & -1 & 0 &  0\\ 
		 \cellcolor[gray]{0.7}-1 &\cellcolor[gray]{0.7} x_{2} & \cellcolor[gray]{0.7}-1 &  0\\ 
		  \cellcolor[gray]{0.7}0 & \cellcolor[gray]{0.7}-1 & \cellcolor[gray]{0.7}x_{3} &  -1\\ 
		  \cellcolor[gray]{0.7}0 & \cellcolor[gray]{0.7} 0 & \cellcolor[gray]{0.7}-1 & x_{4} \\ 
	\end{array}
\right],
$
&
$
L(K_5\setminus S_2)
=
\left[
	\begin{array}{ccccc}
		 \cellcolor[gray]{0.7} x_{1} & 0 & -1 & \cellcolor[gray]{0.7} -1 & \cellcolor[gray]{0.7} 0 \\ 
		 \cellcolor[gray]{0.7} 0 & x_{2} & -1 & \cellcolor[gray]{0.7}  -1& \cellcolor[gray]{0.7} -1 \\ 
		 \cellcolor[gray]{0.7} -1 & -1 & x_{3} & \cellcolor[gray]{0.7} -1& \cellcolor[gray]{0.7} -1 \\ 
		 -1 & -1 & -1 & x_{4} & -1 \\ 
		 0 & -1 & -1 & -1 & x_{5} \\
	\end{array}
\right],
$
\end{tabular}
\\
\begin{tabular}{l@{\extracolsep{1cm}}r}
$
L(\Gaa)
=
\left[
	\begin{array}{ccccc}
		  x_{1} & 0 & \cellcolor[gray]{0.7} 0 & \cellcolor[gray]{0.7} -1 & \cellcolor[gray]{0.7} 0 \\ 
		 0 & x_{2} & \cellcolor[gray]{0.7} -1 & \cellcolor[gray]{0.7} -1 & \cellcolor[gray]{0.7} 0 \\ 
		 0 & -1 & x_{3} & -1 & 0 \\ 
		 -1 & -1 & \cellcolor[gray]{0.7} -1 & \cellcolor[gray]{0.7} x_{4} & \cellcolor[gray]{0.7} -1 \\ 
		 0 & 0 & 0 & -1 & x_{5} \\ 
\end{array}
\right],
$
&
$
L(\Gab)
=
\left[
\begin{array}{ccccc}
 x_{1} & -1 & \cellcolor[gray]{0.7} 0 & \cellcolor[gray]{0.7} -1 & \cellcolor[gray]{0.7} 0 \\ 
 -1 & x_{2} & \cellcolor[gray]{0.7} -1 & \cellcolor[gray]{0.7} -1 & \cellcolor[gray]{0.7} 0 \\ 
 0 & -1 & x_{3} & -1 & 0 \\ 
 -1 & -1 & \cellcolor[gray]{0.7} -1 & \cellcolor[gray]{0.7} x_{4} & \cellcolor[gray]{0.7} -1 \\ 
 0 & 0 & 0 & -1 & x_{5} \\ 
\end{array}
\right],
$
\end{tabular}
\\
\begin{tabular}{l@{\extracolsep{1cm}}r}
\end{tabular}
\begin{tabular}{c}
$
L(K_6\setminus M_2)
=
\left[
	\begin{array}{cccccc}
		  x_{1} & 0 & -1 &  -1 & -1 & -1\\ 
		 \cellcolor[gray]{0.7} 0 & x_{2} & -1 &  -1&  \cellcolor[gray]{0.7}-1 &\cellcolor[gray]{0.7} -1\\ 
		 \cellcolor[gray]{0.7}-1 & -1 & x_{3} &  -1& \cellcolor[gray]{0.7} -1 & \cellcolor[gray]{0.7}0\\ 
		 \cellcolor[gray]{0.7}-1 & -1 & -1 & x_{4} & \cellcolor[gray]{0.7}-1  & \cellcolor[gray]{0.7}-1\\ 
		 -1 & -1 & -1 & -1 & x_{5}  & -1\\
		 -1 & -1 &  0 & -1 & -1 & x_{6} \\
	\end{array}
\right].
$
\end{tabular}
\end{center}
In this matrices we marked with gray some $3\times 3$ square submatrices whose determinant is equal to $\pm 1$.
Then $\gamma(G)\geq 3$ for all $G\in \mathcal{F}_2$.
Finally, using any algebraic system, for instance {\it Macaulay 2}, one can note that the graphs in $\mathcal{F}_2$
has algebraic co-rank equal to $3$.
Moreover, it can be checked that any of his induced subgraphs has algebraic co-rank less or equal to $2$.
\end{proof}

One of the main results of this article is the following:

\begin{Theorem}\label{teo:main1}
Let $G$ be a simple connected graph.
Then, $G\in \Gamma_{\leq 2}$ if and only if $G$ is an induced subgraph of $K_{m,n,o}$ or $T_n\vee (K_m+K_o)$.
\end{Theorem}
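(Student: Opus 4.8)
The plan is to prove the two implications separately, using throughout that $\Gamma_{\le 2}$ is closed under induced subgraphs, i.e. that $\gamma(H)\le\gamma(G)$ whenever $H$ is an induced subgraph of $G$, together with the fact from Proposition~\ref{lem::main1} that every graph in $\Gamma_{\le 2}$ is $\mathcal F_2$-free.

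For the \emph{if} direction it suffices to show that $K_{m,n,o}$ and $T_n\vee(K_m+K_o)$ themselves lie in $\Gamma_{\le 2}$ for all $m,n,o$, since any connected graph induced in one of them then satisfies $\gamma\le 2$. To prove $\gamma(G)\le 2$, equivalently $I_3(G,X_G)\neq\langle 1\rangle$, I would exhibit a field $k$ and a specialization $X_G\to k$ under which $L(G,X_G)$ has rank at most $2$: every $3$-minor then vanishes under the induced ring homomorphism $\mathbb Z[X_G]\to k$, which sends $1$ to $1$, so $1\notin I_3$. Over $k=\mathbb F_2$ this is immediate. Assign a single diagonal value to each part (the three colour classes of $K_{m,n,o}$, or the three blocks $T_n$, $K_m$, $K_o$ in the second graph), say $x\equiv 0$ on $T_n$ and $x\equiv 1$ on $K_m$ and $K_o$. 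Reducing mod $2$, all rows coming from the same part become equal, so the row space is spanned by three vectors $\sigma,\pi,\rho$, and a direct check gives $\sigma+\pi+\rho=0$ over $\mathbb F_2$; hence the rank is at most $2$, as required.

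For the \emph{only if} direction, let $G$ be connected and $\mathcal F_2$-free; in particular $G$ is $P_4$-free, hence a cograph, so it admits a join decomposition $G=K_p\vee B_1\vee\cdots\vee B_q$, where $K_p$ collects the universal vertices and each $B_i$ is a join-indecomposable, hence disconnected, cograph. I would first pin down the factors $B_i$. For any $i$ there is a vertex adjacent to all of $B_i$ (a universal vertex if $p\ge 1$, and otherwise a vertex of another factor, which exists because connectivity forces $q\ge 2$ when $p=0$); joining such a vertex to $B_i$ produces $\Gaa=K_1\vee(K_2+2K_1)$ or $\Gab=K_1\vee(P_3+K_1)$ as soon as $B_i$ has an induced $K_2+2K_1$ or $P_3+K_1$. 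Freeness from \Gaa\ and \Gab\ thus forces each $B_i$ to be a disjoint union of cliques containing no induced $K_2+2K_1$, which leaves only two shapes: an independent set $T_m$, or two cliques $K_a+K_b$ with $a\ge 2$.

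It then remains to recombine the factors. If every $B_i$ is an independent set, $G$ is complete multipartite, and freeness from $K_6\setminus M_2=K_{1,1,2,2}$ forces either at most three parts, giving an induced subgraph of $K_{m,n,o}$, or at most one part of size $\ge 2$, giving $K_p\vee T_m$, which is induced in $T_m\vee(K_p+K_1)$. If instead some factor equals $K_a+K_b$, fix an induced $K_2+K_1=\{x,y,z\}$ inside it and let $D$ be the set of vertices adjacent to all of $x,y,z$ (the universal vertices together with all other factors). Any edge inside $D$ would complete a $K_5\setminus S_2$, so $D$ must be independent; this forces $p\le 1$, at most one further factor, and that factor to be an independent set $T_m$, yielding exactly $G=T_n\vee(K_a+K_b)$, a member of the second family. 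I expect this last step --- controlling $p$, $q$ and the shapes of the factors simultaneously through the ``$D$ is independent'' argument --- to be the main obstacle, whereas the sufficiency direction and the reduction to cographs are comparatively routine.
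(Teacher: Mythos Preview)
Your proposal is correct and takes a genuinely different route from the paper in both directions.

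For the \emph{if} direction, the paper proceeds by brute force: Lemmas~\ref{lemma:minorsKmn}, \ref{lemma:minorsJnm} and Theorems~\ref{teo:minorsKmno}, \ref{teo:minorsJmno} list every $3$-minor of $L(K_{m,n,o},X)$ and $L(T_n\vee(K_m+K_o),X)$, and Theorems~\ref{teo:main1.2}, \ref{teo:main1.3} read off that none of the resulting ideals is $\langle 1\rangle$. Your $\mathbb F_2$-specialisation bypasses all of this: your choice of diagonal values makes every row within a block equal and the three block-row vectors sum to zero over $\mathbb F_2$, so the rank drops to $2$ and hence $1\notin I_3$. This is much shorter, but note what it costs: the paper's explicit description of $I_3$ is exactly what is evaluated at $x_v=d_G(v)$ to obtain Theorem~\ref{teo:main2}, the classification of $\mathcal G_2$. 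Your argument proves Theorem~\ref{teo:main1} but would not by itself yield Theorem~\ref{teo:main2}.

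For the \emph{only if} direction, the paper argues by cases on $\omega(G)\in\{2,3,\ge 4\}$, partitioning the remaining vertices by their adjacencies to a fixed maximum clique and eliminating configurations one forbidden subgraph at a time. Your approach instead exploits $P_4$-freeness globally: you pass to the cotree, writing $G=K_p\vee B_1\vee\cdots\vee B_q$ with each $B_i$ disconnected, use $\Gaa=K_1\vee(K_2+2K_1)$ and $\Gab=K_1\vee(P_3+K_1)$ to force each $B_i$ to be $T_m$ or $K_a+K_b$, and then use $K_6\setminus M_2=K_{1,1,2,2}$ and $K_5\setminus S_2$ (via the observation that the common neighbourhood $D$ of an induced $K_2+K_1$ must be independent) to bound $p$ and $q$. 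This is cleaner and more structural; the only point requiring care is the recombination step, but your ``$D$ independent'' argument does force $p\le 1$, at most one further factor, and that factor to be edgeless, exactly as you claim.
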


We divide the proof of theorem~\ref{teo:main1} in two steps. 
First we classify the connected graphs that are $\mathcal{F}_2$-free.
After that, we check that all these graphs have algebraic co-rank less or equal than two.

\begin{Theorem}\label{teo:main1.1}
A simple connected graph is $\mathcal{F}_2$-free if and only if it is an induced subgraph of $K_{m,n,o}$ or $T_n\vee (K_m+K_o)$.
\end{Theorem}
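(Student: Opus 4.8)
The plan is to prove the equivalence by passing to complements, where both target classes become transparent. Writing $\overline{G}$ for the complement, one first records two translations: since an induced subgraph of $K_{m,n,o}$ has complement an induced subgraph of $K_m+K_n+K_o$, the graph $G$ is an induced subgraph of some $K_{m,n,o}$ if and only if $\overline{G}$ is a disjoint union of at most three complete graphs; and since $\overline{T_n\vee(K_m+K_o)}=K_n+K_{m,o}$, the graph $G$ is an induced subgraph of some $T_n\vee(K_m+K_o)$ if and only if $\overline{G}\cong K_a+K_{b,c}$ is the disjoint union of a complete graph and a complete bipartite graph (empty parts allowed). Taking complements of the members of $\mathcal F_2$ converts everything into a forbidden-subgraph problem for $\overline{G}$: one computes $\overline{P_4}=P_4$, $\overline{K_5\setminus S_2}=P_3+2K_1$, $\overline{K_6\setminus M_2}=2K_2+2K_1$, and the complements of the {\Gaa} and of the {\Gab} are $D+K_1$ and $\mathrm{paw}+K_1$, where $D$ is the diamond ($K_4$ with one edge removed). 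As complementation is an involution preserving the induced-subgraph relation, $G$ is $\mathcal F_2$-free if and only if $\overline{G}$ is free of $\overline{\mathcal F_2}=\{P_4,\,P_3+2K_1,\,2K_2+2K_1,\,D+K_1,\,\mathrm{paw}+K_1\}$, so it suffices to classify the $\overline{\mathcal F_2}$-free graphs.

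For the ``if'' direction I would simply verify that the two target classes are $\overline{\mathcal F_2}$-free, which is a short inspection because induced subgraphs of a disjoint union of complete graphs (resp.\ of $K_a+K_{b,c}$) are again of the same form. Indeed $P_4$, $P_3+2K_1$, $D+K_1$ and $\mathrm{paw}+K_1$ each have a connected piece that is neither complete nor complete bipartite, while $2K_2+2K_1$ is a disjoint union of four complete graphs and is not of the form $K_a+K_{b,c}$; hence none of them embeds in either class.

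The substance is the forward direction. Put $H=\overline{G}$. Because $P_4\in\overline{\mathcal F_2}$, $H$ is a cograph, and since $G$ is connected with at least two vertices (the single-vertex case being immediate) $H$ is disconnected, say with components $C_1,\dots,C_t$, $t\ge 2$. I would then use the three remaining forbidden graphs in turn. First, any vertex of $C_j$ with $j\ne i$ is non-adjacent to all of $C_i$, so $D+K_1$ and $\mathrm{paw}+K_1$ force each $C_i$ to be $\{D,\mathrm{paw}\}$-free; a connected $\{P_4,D,\mathrm{paw}\}$-free graph is complete or complete bipartite (a short argument from the join decomposition of cographs: if one join-factor carries an edge and another a non-edge, a diamond appears, and a complete multipartite graph that is neither complete nor complete bipartite contains an induced diamond). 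Second, if some $C_i$ is complete bipartite but not complete it contains an induced $P_3$; then $P_3+2K_1$ forbids two mutually non-adjacent vertices outside $C_i$, which forces $t=2$ with the other component complete, so $H=K_{p,q}+K_a$ is of type $K_a+K_{b,c}$. Otherwise every $C_i$ is complete: if $t\le 3$ then $H$ is a disjoint union of at most three complete graphs, and if $t\ge 4$ then $2K_2+2K_1$ prevents two components of size $\ge 2$ from coexisting with two further components, so at most one component has size $\ge 2$ and $H=K_a+sK_1$, again of type $K_a+K_{b,c}$. In every case $H$ has one of the two required shapes, and the translations of the first paragraph return the statement for $G$.

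The main obstacle is the classification of connected $\{P_4,D,\mathrm{paw}\}$-free graphs used in the first step, which needs some care to exclude complete multipartite graphs with a repeated part (they already contain a diamond). The second recurring difficulty is the bookkeeping of degenerate parameters throughout the argument — an empty independent set, an empty complete graph, or a complete bipartite part with one empty side — so that borderline graphs are placed correctly; for instance $G=K_n$ must be recognized as an induced subgraph of $T_1\vee(K_n+K_1)$ rather than of any $K_{m,n,o}$, since $K_n$ with $n\ge 4$ is not tripartite. I would fix these conventions at the outset to keep the induced-subgraph translations airtight.
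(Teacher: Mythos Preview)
Your argument is correct and takes a genuinely different route from the paper. The paper never passes to complements; instead it fixes a maximum clique of $G$ and does a case split on $\omega(G)\in\{2,3,\ge 4\}$, partitioning the remaining vertices by how they attach to that clique and repeatedly invoking each member of $\mathcal F_2$ to prune the possibilities. This is elementary and self-contained but produces a long sequence of sub-cases (several auxiliary Claims, separate treatments of the sets $V_0,\dots,V_4$, and further subcases on the shape of $V_4$).

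Your complement approach is more structural: the self-complementarity of $P_4$ immediately puts you in the cograph world, so $\overline G$ splits into components, and the remaining forbidden complements $\{P_3+2K_1,\,2K_2+2K_1,\,D+K_1,\,\text{paw}+K_1\}$ act componentwise or count components. This replaces the paper's neighborhood bookkeeping by a short global argument. What it costs you is exactly the lemma you flagged: that a connected $\{P_4,D,\text{paw}\}$-free graph is complete or complete bipartite. Your parenthetical sketch is not quite a proof---the ``edge in one join-factor, non-edge in another'' observation alone does not finish it, since a co-connected factor with $\ge 2$ vertices need not contain an edge---so you should either invoke Olariu's theorem (a connected paw-free graph is triangle-free or complete multipartite) and then use $D$-freeness to cut the multipartite case down, or give a clean inductive cograph argument. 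Once that lemma is pinned down, your proof is shorter and more transparent than the paper's; the paper's proof, by contrast, needs no external structural result.
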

\begin{proof}
First, one implication is clear because $K_{m,n,o}$ and $T_n\vee (K_m+K_o)$ are $\mathcal{F}_2$-free.
The another part of the proof is divided in three cases: when $\omega(G)=2$, $\omega(G)=3$, and $\omega(G)\geq 4$.

\medskip

The case when $\omega(G)=2$ is very simple.
Since $\omega(G)=2$, there exist $a,b\in V(G)$ such that $ab\in E(G)$.
Clearly, $N_G(a)\cap N_G(b)=\emptyset$.
Moreover, if $x\in \{a,b\}$, then $uv\notin E(G)$ 
for all $u,v\in N_G(x)$.
On the other hand, since $G$ is $P_4$-free, then $uv\in E(G)$ for all $u\in N_G(a)$ and $v\in N_G(b)$.
Therefore $G$ is the complete bipartite graph.

\medskip

Now, assume that $\omega(G)= 3$.
Let $a$, $b$ and $c$ be vertices of $G$ that induce a complete graph.
For all $X\subseteq \{a, b, c\}$ let $V_X=\{v\in V(G)\, | \, N_G(v)\cap\{a, b, c\}=X \}$.
Clearly $V_{\{a, b, c\}}=\emptyset$ because $\omega(G)=3$.
In a similar way, if $X\subseteq \{a, b, c\}$ has size two, then set $V_X$ induce a trivial graph.
Also, since $G$ is $\Gaa$-free, $V_x$ induces a complete graph for all $x\in \{a, b, c\}$.
Thus $V_x$ has at most two vertices.

Now, given $U,V \in V(G)$, let $E(U,V)=\{uv\in E(G)\, | \, u\in U \text{ and } v\in V\}$.
Let $x\neq y\in \{a,b,c\}$ and $z\in \{a,b,c\}$ such that $\{x,y,z\}=\{a,b,c\}$
Assume that $V_x,V_y$ and $V_{\{x,y\}}$ are not empty.
Let $u\in V_x$ and $v\in V_y$.
If $uv\in E(G)$, then $\{u,v,y,z\}$ induced a $P_4$.
Therefore $E(V_x,V_y)=\emptyset$.
In a similar way, since $G$ is $P_4$-free, we get $E(V_x,V_{\{x,y\}})=\emptyset$.
\begin{Claim}
At least two of the sets $V_a$, $V_b$ or $V_c$ are empty.
Furthermore, if $V_a\neq \emptyset$, then $G$ is an induced subgraph of 
$T_l\vee (K_2+K_2)$, where $l=|V_{\{b,c\}}|+1$. 
\end{Claim}
\begin{proof}
First, assume that $V_x$ and $V_y$ are non empty.
Let $u\in V_y$, $v\in V_x$.
Since $u$ and $v$ are not adjacent, the vertices $\{u,x,y,v\}$ induces a $P_4$.
Therefore at least one of $V_x$ or $V_y$  is empty.

Without loss of generality we can assume that $V_a$ is not empty.
Since there is no edge between $V_a$ and $V_{\{a,b\}}$, then $V_{\{a,b\}}=\emptyset$.
Otherwise, if $u\in V_{\{a,b\}}$ and $v\in V_a$, then the vertices $\{u,v,a,b,c\}$ induces a $\Gab$.
In a similar way $V_{\{a,c\}}=\emptyset$.
On the other hand, if $V_{\{b,c\}}$ is not empty and there exist $u\in V_{\{b,c\}}$ and $v\in V_a$ such that $uv\notin E(G)$, 
then the vertices $\{u,b,a,v\}$ induces a $P_4$.
Therefore, either $E(V_a,V_{\{b,c\}})=\{uv\, |\, u\in V_a \text{ and } v\in V_{\{b,c\}}\}$ or the set $V_{\{b,c\}}$ is empty.
Finally, since $V_a$ is a complete graph with at most two vertices, the result follows.
\end{proof} 
Now, we can assume that $V_x=\emptyset$ for all $x\in\{ a, b, c\}$.
Let $\{x,y,z\}= \{ a,b,c\}$. If $uv\notin E(G)$ for some $u\in V_{\{x,y\}}$, $v\in V_{\{x,z\}}$, then $\{u,y,z,v\}$ induces a $P_4$.
Therefore $uv\in E(G)$ for all $u\in V_{\{x,y\}}$ and $u\in V_{\{x,z\}}$, and $G$ is an induced subgraph of the complete tripartite graph.

\medskip

We finish with case when $\omega(G)\geq 4$.
Let $W=\{a,b,c,d\}$ be a complete subgraph of $G$ of size four and  let
\[
V_i=\{v\in V(G)\setminus W\, \big| \, |N_G(V)\cap W|=i \} \text{ for all } i=0,1,2,3,4.
\]
Since $G$ is $K_5\setminus S_2$-free, $V_2=\emptyset$.

\begin{Claim}
The graph induced by $V_1$ is a complete graph.
\end{Claim}
\begin{proof}
Let $u,u'\in V_{1}$ and suppose there is no edge between $u$ and $u'$.
Let $x,y\in W$ be the vertices adjacent to $u$ and $u'$, respectively. 
If $x\neq y$, then $\{u,x,y,u'\}$ induces a $P_4$; a contradiction.
On the other hand, if $x=y$, let $z\neq w\in W\setminus x$.
Since $u$ and $u'$ are not adjacent to both $z$ and $w$,
then $\{x,z,w,u,u'\}$ induces a $\Gaa$; a contradiction.
\end{proof}
Let $v,v'\in V_{3}$ and assume that are adjacent.
Let $x,y\in W$  such that $x\notin N_G(v)$ and $y\notin N_G(v')$.
If $x\neq y$, then $\{v,v'\}\cup W$ induces a $K_6\setminus M_2$; a contradiction.
On the other hand, if $x=y$, then $\{v,v'\}\cup W$ contains a $K_5\setminus S_2$ as induced graph; a contradiction.
Therefore $V_{3}$ induces a trivial graph.

Now, let $u\in V_1$, $v\in V_3$, $x, y\in W$ such that $xu\in E(G)$, $yv\notin E(G)$.
Assume that $uv\notin E(G)$.
Let $z\in W\setminus \{x,y\}$. 
If $x=y$, then $\{v,z,x,u\}$ induces a $P_4$; a contradiction.
On the other hand, if $x\neq y$, then $G$ must contains a $\Gab$ as induced subgraph; a contradiction.
Therefore $E(V_{1},V_3)$ contains all the possible edges.
Since $uv\in E(G)$, then $x=y$.
Otherwise, if $x\neq y$, then $\{y,z,v,u\}$ induces a $P_4$; a contradiction.
Therefore we can assume without loss of generality that $\{a\}=N_G(V_1)\cap W=(N_G(V_3)\cap W)^c$.

Now, let $w\in V_4$, $u\in V_1$, and $v\in V_3$.
If $uw\in E(G)$, then $\{u,w,a,b,c\}$ induces a $K_5\setminus S_2$.
Therefore, $E(V_1,V_4)=\emptyset$.
In a similar way, if $vw\notin E(G)$, then $\{v,a,w,b,c\}$ induces a $K_5\setminus S_2$.
Therefore, $E(V_3,V_4)=\{vw\, | \, v\in V_3 \text{ and } w\in V_4\}$.

Since $G$ is $\{K_5\setminus S_2,K_6\setminus M_2\}$-free, then it is not difficult to see that
the graph induced by $V_4$ is $\{K_2+T_1,C_4\}$-free.
Thus $V_4$ induces either a trivial graph, a complete graph, or a complete graph minus an edge.
Moreover, if $ww'\notin E(G)$ for some $w\neq w'\in V_4$ and $v\in V_3$, then $\{w,w',a,v,b,c\}$ induces a $K_6\setminus M_2$.
Thus, if $V_3\neq \emptyset$, then $V_4$ induces a complete graph.

Clearly, if $V_1,V_3,V_4=\emptyset$, then $G$ is a complete graph.
\begin{Claim}
If $V_1,V_3=\emptyset$ and $V_4\neq \emptyset$, then $G$ is an induced subgraph of $T_1\vee (K_m+K_n)$ for some $m,n\in \mathbb{N}$.
\end{Claim}
\begin{proof}
If $|V_0|=|V_4|=1$, then the result is clear.
Therefore we can assume that $|V_4|\geq 2$ or $|V_0|\geq 2$.
Moreover we need to consider three cases for $V_4$, when it induces a trivial graph, a complete graph, or a complete graph minus an edge.
Assume that $V_4$ induces a trivial graph.
If $|V_4|\geq 2$, let $o \in V_0$ and $w,w'\in V_4$.
If $ow\in E(G)$ and $ow'\notin E(G)$, then $\{o,w,w',a\}$ induces a $P_4$; a contradiction.
Thus, either $E(o, V_4)=\{ow\, | \, w\in V_4\}$ or $E(o, V_4)$ is empty.
Therefore, since $G$ is connected, we get the result when $|V_0|=1$.

Now, assume that $|V_0|\geq 2$. 
Since $G$ is connected, there exist $o\in V_0$ such that $ow\in E(G)$ for some $w\in V_4$.
Let $o'\in V_0$ such that $E(o', V_4)$ is empty.
Since $G$ is connected, there exist a path from $o'$ to $o$.
Let $P$ be a minimum path between $o'$ and $o$.
In this case, $\{V(P),w,a\}$ induces a path with more that four vertices; a contradiction.
Therefore, $E(V_0, V_4)=\{ow\, | \, o\in V_0 \text{ and }w\in V_4\}$.
Moreover, since $G$ is $K_6\setminus M_2$-free, then $V_0$ induces a trivial graph and we get the result.

Now, assume that $V_4$ induces a complete graph.
Since $G$ is $K_5\setminus S_2$-free, $o$ is adjacent to at most one vertex in $V_4$.
Moreover, all the vertices in $V_0$ are adjacent to a unique vertex in $V_4$.
Otherwise, let $o,o'\in V_0$ and $w,w'\in V_4$ such that $ow,o'w'\in E(G)$ and $ow',o'w\notin E(G)$.
If $oo'\in E(G)$, then $\{a,w,o,o'\}$ induces a $P_4$; a contradiction.
Also, if $oo'\notin E(G)$ and $ww'\in E(G)$, then $\{w,w',o,o'\}$ induces a $P_4$; a contradiction.
Let $w\in V_4$ such that all the vertices in $V_0$ are adjacent to $w$.
Then $V_0$ induces a complete graph.
Otherwise, $\{a,b,w,o,o'\}$ induces a $\Gaa$; a contradiction.
Therefore $G$ is an induced subgraph of $T_1\vee (K_m+K_n)$ for some $m,n\in \mathbb{N}$.

Finally, when $V_4$ induces a complete graph minus an edge, following similar arguments
to those given in the case when $V_4$ induces a complete graph we get that $G$ is an induced 
subgraph of $T_2\vee (K_m+K_n)$ for some $m,n\in \mathbb{N}$.
\end{proof}

Therefore we can assume that $V_1\cup V_{3}\neq \emptyset$. 
Let $u\in V_1\cup V_{3}$, $o\in V_0$,  and $x\neq y \in W$ such that $x\notin N_G(u)$ and $y\in N_G(u)$.
If $uo\in E(G)$, then $\{x,y,u,o\}$ induces a $P_4$; a contradiction.
Thus, there are no edges between $V_0$ and $V_1\cup V_3$.
Moreover, let $w\in V_4$. 
If $ow\in E(G)$, then $\{a,b,u,w,o\}$ induces a $\Gab$ when $u\in V_3$ and $\{u,a,w,o\}$ induces a $P_4$ when $u\in V_1$.
Therefore there are no edges between $V_0$ and $V_4$.
Since $G$ is connected, $V_0=\emptyset$ and therefore $G$ is an induced subgraph of $T_n\vee (K_m+K_o)$.
\end{proof}

To finish the proof of theorem~\ref{teo:main1} we need to prove that the third critical 
ideal of the graphs $K_{m,n,o}$ and $T_n\vee (K_m+K_o)$ is not trivial.
If $m+n+o\leq 2$, then the third critical ideal is equal to zero.
Also, if $m+n+o= 3$, then the third critical ideal is equal to the determinant of the generalized Laplacian matrix.
Moreover, \cite[theorem 3.16]{corrval} 
proves that the algebraic co-rank of the complete graph is equal to $1$.

\begin{Theorem}\label{teo:main1.2}
If $K_{m,n,o}$ is connected with $m\geq n \geq o$ and $m+n+o\geq 4$, then
\begin{equation}\label{I3K}
\small
I_3(K_{m,n,o},\{X,Y,Z\})=
\begin{cases}
	\left< 2, \bigcup_{i=1}^{m}x_i, \bigcup_{i=1}^{n}y_i, \bigcup_{i=1}^{o}z_i \right> & \text{if } m, n, o\geq 2,\\
	\left< \bigcup_{i=1}^{m}x_i, \bigcup_{i=1}^{n}y_i, z_1+2\right> & \text{if } m\geq 2, n\geq 2, o=1,\\
	\left< \bigcup_{i=1}^{m}x_i, y_1+z_1+2 \right> & \text{if } m\geq 3, n=1, o=1,\\
	\left< x_1x_2+x_1+x_2, x_1 z_1+x_1, x_2 z_1+x_2, y_1+z_1+2\right> & \text{if } m=2, n=1, o=1,\\
	\left< \bigcup_{i=1}^{m}x_i, \bigcup_{i=1}^{n}y_i \right> & \text{if } m\geq 3, n\geq 3, o=0,\\
	\left< \bigcup_{i=1}^{m}x_i, y_1+y_2 \right> & \text{if } m\geq 3, n=2, o=0,\\
	\left< x_2y_2,x_1+x_2,y_1+y_2\right> & \text{if } m=2, n=2, o=0,\\
	\left< \bigcup_{i=1}^{m}x_i \right> & \text{if } m\geq 3, n=1, o=0.
\end{cases}
\end{equation}
\end{Theorem}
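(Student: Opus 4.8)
The plan is to prove both inclusions separately, case by case, exploiting the block structure of the generalized Laplacian. Ordering the vertices by parts, $L(K_{m,n,o})$ is
\[
L=\begin{pmatrix} D_X & -J & -J \\ -J & D_Y & -J \\ -J & -J & D_Z\end{pmatrix},
\]
where $D_X=\mathrm{diag}(x_1,\dots,x_m)$, $D_Y=\mathrm{diag}(y_1,\dots,y_n)$, $D_Z=\mathrm{diag}(z_1,\dots,z_o)$, and the off-diagonal blocks are all-ones matrices $J$ of the appropriate sizes. The single structural fact driving everything is that two vertices of the \emph{same} part are twins: their rows (and columns) of $L$ agree in every entry except on the diagonal. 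I would fix the ideal $\mathcal I$ claimed in each of the eight regimes of \eqref{I3K} and prove $I_3(K_{m,n,o})=\mathcal I$ by showing $\mathcal I\subseteq I_3$ (exhibit minors) and $I_3\subseteq\mathcal I$ (a rank bound modulo $\mathcal I$).

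For the inclusion $\mathcal I\subseteq I_3$ I would write down a short dictionary of representative $3\times3$ minors and point out that the case boundaries are dictated precisely by how many \emph{distinct} indices a part must supply to realize a given generator. Three mutually adjacent vertices taken off the diagonal give the minor $\det\begin{pmatrix}0&-1&-1\\-1&0&-1\\-1&-1&0\end{pmatrix}=-2$, which needs a spare vertex in each of three parts, hence produces the constant $2$ only when $m,n,o\ge2$. To \emph{isolate} a single variable one uses
\[
\det\begin{pmatrix} x_i & 0 & -1 \\ 0 & 0 & -1 \\ -1 & -1 & -1\end{pmatrix}=-x_i,
\]
the rows being $\{x_i,x_j,*\}$ and the columns $\{x_i,x_k,*\}$ with $i,j,k$ distinct and $*$ a vertex in another part; this requires three distinct indices in the part of $x_i$, explaining why the individual generator $x_i$ appears exactly when $m\ge3$. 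When a part is too small to furnish these extra indices, the same family of minors only yields merged generators such as $y_1+y_2$, $z_1+2$, $y_1+z_1+2$, or (in the smallest parts) the quadratic generators $x_1x_2+x_1+x_2$ and $x_2y_2$. I would record, for each of the eight regimes, the handful of minors whose determinants are exactly the listed generators or differences thereof.

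For the reverse inclusion $I_3\subseteq\mathcal I$ I would use one uniform device: reduce $L$ modulo $\mathcal I$ and show the resulting matrix over $R/\mathcal I$ has rank at most $2$, so that \emph{every} $3\times3$ minor vanishes in $R/\mathcal I$, i.e.\ lies in $\mathcal I$. By the twin property the reduction collapses each part to a single row-type $r_X,r_Y,r_Z$, and the defining relations of $\mathcal I$ are exactly what forces these three rows to be dependent. Concretely, when $m,n,o\ge2$ the quotient is $\mathbb F_2$, the diagonal dies, and $r_X+r_Y+r_Z=0$ (every column meets exactly two of the three parts); when $o=1$ one checks $r_Z=r_X+r_Y$ over $\mathbb Z$ after substituting $z_1=-2$; when $n=o=1$ a dependence with coefficient $-(z_1+1)$ on $r_X$ does the job; and in the bipartite regimes $o=0$ there are only two distinct row-types, so rank $\le2$ is automatic. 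In each regime this reduces the infinitely many minors to the single verification that $r_X,r_Y,r_Z$ are linearly dependent.

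The main obstacle is the two genuinely mixed small cases, $m=2,\,n=o=1$ and $m=n=2,\,o=0$, where the generators are quadratic and neither the $\mathbb F_2$-collapse nor a clean integer dependence isolates the variables. There I expect to verify both inclusions by an explicit but finite enumeration of the $3\times3$ minors, and then confirm that the listed polynomials are exactly a reduced Gröbner basis of the ideal they generate (the cleanest way to certify equality rather than mere containment), in the spirit of the Gröbner-basis computation of \cite[theorem 3.14]{corrval}. The remaining work is the bookkeeping of checking the row-dependence and the generating minors across all eight regimes, together with the boundary instances $m+n+o=4$, which I would treat by direct evaluation of the few small matrices involved.
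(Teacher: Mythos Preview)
Your approach is correct and genuinely different from the paper's. The paper proves Theorem~\ref{teo:main1.2} by exhaustively listing \emph{all} $3$-minors of $L(K_{m,n,o})$ (Lemma~\ref{lemma:minorsKmn} for the bipartite block and Theorem~\ref{teo:minorsKmno} for the full tripartite matrix), classifying them by how the row and column indices distribute among the three parts, and then checking by inspection that the resulting list of polynomials is generated by the claimed set. Your route replaces this enumeration, for the containment $I_3\subseteq\mathcal I$, by the rank argument: once every row of $L\bmod\mathcal I$ lies in the $R/\mathcal I$-span of two fixed rows, $L$ factors as $(N\times 2)\cdot(2\times N)$ over $R/\mathcal I$ and Cauchy--Binet kills every $3\times 3$ minor. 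This is cleaner and scales with $m,n,o$ without any new case analysis; the paper's enumeration, on the other hand, yields the complete list of minors as a by-product (useful elsewhere, e.g.\ in Theorem~\ref{teo:main2}), which your method does not.

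One small imprecision to fix: in the bipartite regimes with a part of size $2$ (e.g.\ $m\ge 3$, $n=2$, $o=0$, or $m=n=2$, $o=0$) it is not literally true that there are only two distinct row-types modulo $\mathcal I$, since the $Y$-rows still differ on the diagonal when only $y_1+y_2$ is killed rather than each $y_j$. The rank bound still holds, but you must exhibit the dependence explicitly: e.g.\ with $y_1\equiv -y_2$ one has $r_{Y_1}-r_{Y_2}=y_2\,r_X$, and in the $m=n=2$ case (over $\mathbb Z[x_2,y_2]/(x_2y_2)$) one checks $r_{X_2}=r_{X_1}-x_2 r_{Y_1}$ and $r_{Y_2}=-y_2 r_{X_1}+r_{Y_1}$. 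With those two lines added, your argument goes through in every regime.
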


\begin{Theorem}\label{teo:main1.3}
If $T_n\vee (K_m+K_o)$ is connected with $m \geq o$, $m+n+o\geq 4$, 
and such that $T_n\vee (K_m+K_o)$ is not the complete graph or the complete bipartite graph, then
\begin{equation}\label{I3L}
\small
I_3(T_n\vee (K_m+K_o),\{X,Y,Z\})=
\begin{cases}
\left< 2, \bigcup_{i=1}^{m}(x_i+1), \bigcup_{i=1}^{n}y_i, \bigcup_{i=1}^{o}(z_i+1) \right> & \text{if } m, n, o\geq 2,\\
\left< \bigcup_{i=1}^{m}(x_i+1), y_1+2,\bigcup_{i=1}^{o}(z_i+1) \right> & \text{if } m\geq 2, n=1, o\geq 2,\\
\left< \bigcup_{i=1}^{m}(x_i+1), \bigcup_{i=1}^{n}y_i, z_1-1 \right> & \text{if } m\geq 2,  n \geq 2,  o=1,\\
\left< x_1+z_1, \bigcup_{i=1}^{n}y_i \right> & \text{if } m=1,  n\geq 3,  o=1,\\
\left< x_1+z_1, y_1+y_2, y_2 z_1,\right> & \text{if }  m=1,  n = 2,  o=1,\\
\left<  \bigcup_{i=1}^{m}(x_i+1), z_1 y_1+z_1-1\right> & \text{if } m\geq 2,  n=1,  o=1,\\
\left< \bigcup_{i=1}^{m}(x_i+1), \bigcup_{i=1}^{n}y_i \right> & \text{if } m\geq 3,  n \geq 3,  o=0,\\
\left< x_1+x_2+2, \bigcup_{i=1}^{n}y_i \right> & \text{if } m=2,  n\geq 3,  o=0,\\
\left< \bigcup_{i=1}^{m}(x_i+1), y_1 y_2+y_1+y_2 \right> & \text{if } m\geq 3,  n=2,  o=0,\\
\left< x_1+x_2+2, x_2 y_1+y_1, x_2 y_2+y_2, y_1 y_2+y_1+y_2\right> & \text{if } m=2,  n =2,  o=0,\\
\end{cases}
\end{equation}
\end{Theorem}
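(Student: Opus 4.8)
The plan is to compute $I_3$ directly from the generalized Laplacian matrix, exploiting the block structure of $L:=L(T_n\vee(K_m+K_o))$ together with the fact that the determinantal ideals $I_i$ are invariant not merely under the integral equivalence $\sim$, but under multiplication by any invertible matrix over $\mathbb{Z}[X,Y,Z]$; in particular, adding a polynomial multiple of one row (or column) to another preserves every $I_i$. Ordering the vertices as $K_m$, then $T_n$, then $K_o$, the matrix is
\[
L=\begin{pmatrix}
\mathrm{diag}(x_i)+I_m-J_{m,m} & -J_{m,n} & 0\\
-J_{n,m} & \mathrm{diag}(y_j) & -J_{n,o}\\
0 & -J_{o,n} & \mathrm{diag}(z_k)+I_o-J_{o,o}
\end{pmatrix},
\]
so the two clique blocks are $\mathrm{diag}(x_i+1)-J_{m,m}$ and $\mathrm{diag}(z_k+1)-J_{o,o}$, while there is no edge between $K_m$ and $K_o$. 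First I would record the structural reduction coming from the cliques: subtracting the first $K_m$-row from the $i$-th one (for $2\le i\le m$) annihilates every entry outside the $K_m$-columns, since both rows agree on the $T_n$- and $K_o$-parts, and leaves the sparse row $(x_i+1)e_i-(x_1+1)e_1$; the same operation inside $K_o$ gives rows $(z_k+1)e_k-(z_1+1)e_1$. This exhibits an equivalent, far sparser matrix whose $3\times 3$ minors are easy to list.

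The generators are then read off from a short list of explicit $3$-minors. The clique shifts appear cleanly: when $m\ge 2$ and $o\ge 1$, the submatrix of $L$ on rows $\{x_1,x_2,y_1\}$ and columns $\{x_1,y_1,z_1\}$ is
\[
\begin{pmatrix} x_1 & -1 & 0\\ -1 & -1 & 0\\ -1 & y_1 & -1\end{pmatrix},\qquad \text{with determinant } x_1+1,
\]
so that $x_i+1\in I_3$, and the mirror computation gives $z_k+1\in I_3$ when $o\ge 2$. The $y_j$ arise from minors mixing the two clique blocks with a single $T_n$-vertex, and the constant $2$ comes from the fully mixed symmetric minor on one vertex of each block,
\[
\det\begin{pmatrix} x_i & -1 & 0\\ -1 & y_j & -1\\ 0 & -1 & z_k\end{pmatrix}=x_iy_jz_k-x_i-z_k\equiv 2 \pmod{\langle x_i+1,\,y_j,\,z_k+1\rangle},
\]
which lies in $I_3$ as soon as $m,n,o\ge 2$, so that the three linear generators are already available. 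The whole point of the case split in \eqref{I3L} is precisely which of these clean generators can be isolated: when a block degenerates ($o=1$, or $o=0$, and so on) the corresponding shift cannot be produced and survives only in a combined form such as $z_1-1$, $x_1+x_2+2$, or $z_1y_1+z_1-1$.

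With the generating set identified, I would prove the reverse inclusion by checking that every $3$-minor of the reduced matrix lies in the stated ideal. Because the reduced $K_m$- and $K_o$-rows are supported on two columns, a general $3$-minor factors as a monomial in the shifts $(x_i+1)$, $(z_k+1)$ and the $y_j$, times a bounded combinatorial constant; reducing these against the explicit generators above (and against $2$, when present) shows they contribute nothing new. This is organized exactly as in the companion Theorem~\ref{teo:main1.2}, running over the same boundary regimes $o\in\{0,1,\ge 2\}$, $n\in\{1,\ge 2\}$ under $m\ge o$, and discarding the two excluded configurations (the complete graph and the complete bipartite graph), for which $I_3$ would be trivial and which are already covered by Theorem~\ref{teo:gamma1}.

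The main obstacle is not any single determinant but the bookkeeping of the degenerate cases: verifying, in each of the ten regimes of \eqref{I3L}, that the surviving generators are exactly the listed ones — in particular that the constant appearing is correctly $2$, is a $-1$ shift, or is absorbed into a mixed generator — and that no further $3$-minor escapes the ideal. Confirming these boundary identities, ideally supported by a Gr\"obner basis computation in the spirit of \cite[theorem 3.14]{corrval} and a machine check for the smallest parameters, is where the real work lies.
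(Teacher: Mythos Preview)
Your approach is sound and genuinely different from the paper's. The paper proves this theorem by brute enumeration: it states (Lemma~\ref{lemma:minorsJnm} and Theorem~\ref{teo:minorsJmno}, with proofs omitted as ``routine'') the complete list of $3$-minors of $L(K_m\vee T_n)$ and of $L(T_n\vee(K_m+K_o))$, obtained by running through every possible distribution $(|\mathcal I_1|,|\mathcal I_2|,|\mathcal I_3|;|\mathcal J_1|,|\mathcal J_2|,|\mathcal J_3|)$ of row and column indices among the three blocks, exactly as in the proof of Theorem~\ref{teo:minorsKmno}. The ideal in each regime is then read off from that list.

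You instead exploit the invariance of Fitting ideals under elementary row operations over $\mathbb{Z}[X,Y,Z]$ to replace $L$ by a sparse equivalent in which rows $2,\ldots,m$ of the $K_m$-block (and rows $2,\ldots,o$ of the $K_o$-block) are supported on two columns and carry the shifts $x_i+1$, $z_k+1$ as entries. This makes the inclusion ``every $3$-minor lies in the stated ideal'' nearly automatic for any minor touching a reduced row, so the residual case analysis concerns only minors built from the two surviving ``full'' clique rows together with $T_n$-rows. The paper's enumeration is more mechanical but produces the explicit minor list as a by-product; your reduction is cleaner conceptually and explains \emph{why} the generators $x_i+1$, $z_k+1$, $y_j$ appear, but you still owe the same boundary bookkeeping (e.g.\ when $o\le 1$ the $z$-shift cannot be isolated and you must check that the mixed generators $z_1-1$, $z_1y_1+z_1-1$, $x_1+z_1$ really exhaust what survives). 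One small remark: the constant $2$ in the case $m,n,o\ge 2$ is in fact a single off-diagonal $3$-minor (rows $x_{i_1},y_{j_1},z_{k_1}$, columns $x_{i_2},y_{j_2},z_{k_2}$), not only the reduction you wrote; either derivation works.
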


The proofs of theorems~\ref{teo:main1.2} and~\ref{teo:main1.3} relies on the description of the 
$3$-minors of the generalized Laplacian matrices of $K_{m,n,o}$ and $T_n\vee (K_m+K_o)$.


\begin{proof}[Proof of theorem~\ref{teo:main1.2}] 
In order to simplify the arguments in the proof we separate it in two parts.
We begin by finding the $3$-minors of the generalized Laplacian matrix of the complete bipartite graph
and using it to calculate their third critical ideal.
An after that, we do the same for the general case of the complete tripartite graph.

\begin{Lemma}\label{lemma:minorsKmn}
For $m,n\geq 1$, let $L_{m,n}$ be the generalized Laplacian matrix of the complete bipartite graph $K_{m,n}$.
That is, 
\[
L_{m,n}=L(K_{m,n},\{ X_{T_m}, Y_{T_n}\})=\left[
	\begin{array}{ccccccccc}
		L(T_{m},X_{T_{m}}) & -J_{m, n}\\
		-J_{n, m} & L(T_{n},Y_{T_{n}})\\
	\end{array}
\right].
\]
Then $3$-minors (with positive leading coefficient) of $L_{m,n}$ are the following:

{\centering
\small
\begin{tabular}{l@{\extracolsep{20mm}}l}
$\bullet$ $y_{j_1}$, $y_{j_1} y_{j_2}$, and $y_{j_1} y_{j_2} y_{j_3}$  when $n\geq 3$, & 
$\bullet$ $x_{i_1}$, $x_{i_1} x_{i_2}$, and $x_{i_1} x_{i_2} x_{i_3}$ when $m\geq 3$, \\
$\bullet$ $y_{j_1} y_{j_2} x_{i_1} - y_{j_1} - y_{j_2}$  when $n\geq 2$, &
$\bullet$ $x_{i_1} x_{i_2} y_{j_1} - x_{i_1} - x_{i_2}$  when $m\geq 2$,\\ 
$\bullet$ $x_{i_1} +x_{i_2}$, $y_{j_1} +y_{j_2}$ and $x_{i_1} y_{j_1}$  when $m\geq 2$ and $n\geq 2$,\\
\end{tabular}}

\noindent where $1\leq i_1< i_2 < i_3\leq n$ and $1\leq j_1< j_2 <  j_3\leq n$.
\end{Lemma}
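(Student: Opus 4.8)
The plan is to exploit the explicit block form
\[
L_{m,n}=\begin{bmatrix}\mathrm{diag}(x_1,\dots,x_m)&-J_{m,n}\\-J_{n,m}&\mathrm{diag}(y_1,\dots,y_n)\end{bmatrix},
\]
in which the two diagonal blocks arise from $L(T_m,X_{T_m})$ and $L(T_n,Y_{T_n})$ (trivial graphs have no edges, so these blocks are genuinely diagonal) and the off-diagonal blocks are the constant matrices $-J$. A $3$-square submatrix $L_{m,n}[\mathcal I;\mathcal J]$ is specified by choosing three rows and three columns, and I would organize the computation by recording how many of them lie among the first $m$ (``$x$'') indices. Writing $p$ for the number of chosen rows among the $x$-vertices and $r$ for the number of chosen columns among the $x$-vertices, every $3$-minor falls into one of the sixteen types $(p,r)$ with $0\le p,r\le 3$, and the proof amounts to evaluating the determinant in each type.

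The evaluation rests on two structural observations that I would isolate first. First, a column (resp.\ row) chosen from the off-diagonal $-J$ block restricted to off-diagonal rows (resp.\ columns) is constant equal to $-1$; hence whenever two of the three chosen columns are pure $x$-columns seen through $y$-rows only, or symmetrically, the submatrix has two identical $-1$ lines and its determinant vanishes. This kills at once the types $(3,0),(0,3),(3,1),(1,3),(2,0),(0,2)$. Second, a submatrix of a diagonal matrix has a nonzero entry in position $(a,b)$ only when the $a$-th selected row index coincides with the $b$-th selected column index, so within each diagonal block at most one entry per row and per column survives. Combining this with the first observation, I would compute the surviving types as follows: the pure types $(3,3)$ and $(0,0)$ give the cubic monomials $x_{i_1}x_{i_2}x_{i_3}$ and $y_{j_1}y_{j_2}y_{j_3}$ (nonzero precisely when the row and column index sets agree); the ``near-pure'' types $(3,2),(2,3)$ and $(0,1),(1,0)$ give, after expanding along the constant $-1$ line, the quadratic monomials $x_{i_1}x_{i_2}$ and $y_{j_1}y_{j_2}$; the types $(2,1),(1,2)$ reduce, by subtracting the two rows (or columns) that agree on the $-1$ block, to a product and yield the mixed monomial $x_{i_1}y_{j_1}$; and finally the central types $(2,2)$ and $(1,1)$, after a single cofactor expansion, produce $x_{i_1}x_{i_2}y_{j_1}-x_{i_1}-x_{i_2}$, $x_{i_1}+x_{i_2}$, or the bare $x_{i_1}$ (and symmetrically for $y$), depending on how many of the chosen diagonal-block row indices match the chosen column indices.

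The main obstacle is not any single determinant—each reduces to a $2\times2$ or an upper-triangular $3\times3$ evaluation—but the careful bookkeeping of the matching patterns inside the diagonal blocks and the induced thresholds on $m$ and $n$. Concretely, I would track how many \emph{distinct} indices a given pattern forces: for instance the bare monomials $x_{i_1}$ and $y_{j_1}$ only appear when a diagonal block contributes an unmatched row and column, which requires three distinct indices there and hence $m\ge 3$ or $n\ge 3$, whereas the linear sums $x_{i_1}+x_{i_2}$ and the mixed monomial $x_{i_1}y_{j_1}$ need only $m,n\ge 2$. Verifying that every listed generator indeed occurs for \emph{some} admissible choice, and that no other polynomial survives, is exactly the case-by-case content; the sign ambiguity is irrelevant for generating the ideal, which is why the statement records each minor only up to making its leading coefficient positive. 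Assembling the outcomes of all sixteen types then reproduces precisely the five families in the statement, completing the proof.
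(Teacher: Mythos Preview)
Your proposal is correct and follows essentially the same route as the paper: both arguments classify the $3$-submatrices by how many of the chosen rows and columns fall in the $x$-block versus the $y$-block, discard the types that force two identical $-1$ lines, and then evaluate the few remaining determinants while tracking which matching patterns force extra distinct indices (hence the thresholds $m\ge 3$, $n\ge 3$, etc.). The only cosmetic difference is that the paper halves the case list up front by the WLOG assumption $|\mathcal I_2|\le|\mathcal J_2|$ and then walks through $|\mathcal I_2|=0,1,2,3$, whereas you enumerate all sixteen $(p,r)$ types and appeal to the $x\leftrightarrow y$ symmetry at the end.
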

\begin{proof}
Before to proceed with the proof we establish some notation corresponding to row and column indices.
Let $\mathcal{I}=\{i_1, i_2, i_3\}$ such that $1\leq i_1< i_2< i_3\leq m+n$, and $\mathcal{J}=\{j_1,j_2,j_3\}$ such that $1\leq j_1<j_2<j_3\leq m+n$.
Let $\mathcal{I}_1=\mathcal{I}\cap [m]$, $\mathcal{I}_2=\mathcal{I}^c_1$, $\mathcal{J}_1=\mathcal{J}\cap [m]$, and $\mathcal{J}_2=\mathcal{J}^c_1$.
Also in the following $i'_t=i_t-m$ and $j'_t=j_t-m$, for all $1\leq t \leq 3$.

In order to find all the $3$-minors of $L_{m,n}$ we need to calculate the determinants of all non-singular matrices of the form $L_{m,n}[\mathcal{I},\mathcal{J}]$.
Since the generalized Laplacian matrix is symmetric, we can assume without loss of generalization that $|\mathcal{I}_2| \leq |\mathcal{J}_2|$.
Let $L=L_{m,n}[\mathcal{I};\mathcal{J}]$ be non-singular.
First, consider the case when $\mathcal{I}_2$ is empty. 
Since the determinant of $L$ is equal to zero when $|\mathcal{J}_2|\geq 2$, only remains to consider the cases when $|\mathcal{J}_2|=0$ or $|\mathcal{J}_2|=1$.
If $|\mathcal{J}_2|=0$, then $m\geq 3$, $L$ is a submatrix of $L(T_{m},X_{T_{m}})$, and the determinant of $L$ is equal to $x_{i_1}x_{i_2}x_{i_3}$.
In a similar way, if $|\mathcal{J}_2|=1$, then $m\geq 3$, $n\geq 1$, and $L$ is equal to (up to row permutation) 
\[
\left[
	\begin{array}{ccc}
		x_{j_1} & 0 & -1\\
		0 & x_{j_2} & -1\\
		0 & 0 & -1\\
	\end{array}
\right] 
\]
whose determinant is equal to $-x_{j_1}x_{j_2}$.

Now, consider the case when $|\mathcal{I}_2|=1$.
In a similar way, $L$ has determinant different from zero when $|\mathcal{J}_2|=1$ or $|\mathcal{J}_2|=2$.
If $|\mathcal{J}_2|=1$, then there are essentially only four $3\times 3$ non-singular submatrices of $L_{m,n}$:
\[
\left[
	\begin{array}{ccccccccc}
		x_{i_1} & 0 & -1\\
		0 & A & -1\\
		-1 & -1 & B\\
	\end{array}
\right], 
\]
where $A$ is equal to $0$ (when $m\geq 3$) and $x_{i_2}$, and $B$ is equal to $0$ (when $n\geq 2$) and $y_{i'_3}$.
Clearly $\det(L)=ABx_{i_1}-A-x_{i_1}$.
Thus we have the following minors:
$x_{i_1}x_{i_2}y_{i'_3}-x_{i_1}-x_{i_2}$, $-x_{i_1}-x_{i_2}$, $-x_{i_1}$.
If $|\mathcal{J}_2|=2$, then $m\geq 2$, $n\geq 2$, and $L$ has determinant equal to
\[
\det\left[
	\begin{array}{ccccccccc}
		x_{j_1} & -1 & -1\\
		0 & -1 & -1\\
		-1 & 0 & y_{i'_3}\\
	\end{array}
\right]=-x_{j_1}y_{i'_3}.
\]
When $|\mathcal{I}_2|=2$ we have two cases, when either $|\mathcal{J}_2|=2$ or $|\mathcal{J}_2|=3$.
If $|\mathcal{J}_2|=2$, then $L$ is equal to: 
\[
\left[
	\begin{array}{ccccccccc}
		A & -1 & -1\\
		-1 & y_{i'_2} & 0\\
		-1 & 0 & B\\
	\end{array}
\right]
\]
where $A$ is equal to $0$ (when $m\geq 2$) or $x_{i_1}$ and $B$ is equal to $0$ (when $n\geq 3$) or $y_{i'_3}$.
It is easy to see that $\det(L)=ABy_{i'_2}-A-y_{i'_2}$.
Thus we have the following minors:
$x_{i_1}y_{i'_2}y_{i'_3}-y_{i'_2}-y_{i'_3}$, $-y_{i'_2}-y_{i'_3}$, $-y_{i'_2}$.
If $|\mathcal{J}_2|=3$, then $m\geq 1$, $n\geq 3$ and there are only one non-singular matrix whose determinant is equal to
\[
\det \left[
	\begin{array}{ccccccccc}
		-1 & -1 & -1\\
		y_{i'_2} & 0 & 0\\
		0 & y_{i'_3} & 0\\
	\end{array}
\right]=-y_{i'_2}y_{i'_3}.
\]
Finally, if $|\mathcal{I}_2|=3$, then $n\geq 3$, $L$ is a submatrix of $L(T_{m},Y_{T_{m}})$, and therefore its determinant is equal to $y_{i'_1}y_{i'_2}y_{i'_3}$.
\end{proof}

We can use lemma~\ref{lemma:minorsKmn} to get the third critical ideal of the complete bipartite graph.
For instance, it is not difficult to see that $I_3(K_{m,n},\{X,Y\})=\left< \bigcup_{i=1}^{m}x_i, \bigcup_{i=1}^{n}y_i \right>$ when $m\geq 3$ and $n\geq 3$.
In a similar way, since $x_{i_1} +x_{i_2}, x_{i_1} y_{j_1}, y_{j_1} y_{j_2} x_{i_1} - y_{j_1} - y_{j_2}, x_{i_1} x_{i_2},x_{i_1} x_{i_2} x_{i_3}
\in \left< \bigcup_{i=1}^{m}x_i, y_1+y_2 \right>$, $I_3(K_{m,n},\{X,Y\})=\left< \bigcup_{i=1}^{m}x_i, y_1+y_2 \right>$ when $m\geq 3$ and $n=2$.
The other cases follow in a similar way.

Therefore in order to calculate the third critical ideal of the complete tripartite graph we need to calculate their $3$-minors as below.

\begin{Theorem}\label{teo:minorsKmno}
For $m,n,o\geq 1$, let $L_{m,n,o}$ be the generalized Laplacian matrix of the tripartite complete graph $K_{m,n,o}$.
That is,
\[
L_{m,n,o}=L(K_{m, n, o},\{X_{T_{m}},Y_{T_{n}},Z_{T_{o}}\})=
\left[
\begin{array}{ccccccccc}
	L(T_{m},X_{T_{m}}) & -J_{m, n} & -J_{m, o}\\
	-J_{n, m} & L(T_{n},Y_{T_{n}}) & -J_{n, o}\\
	-J_{o, m} & -J_{o, n} & L(T_{o},Z_{T_{o}})
\end{array}
\right].
\]
Then the $3$-minors (with positive leading coefficient) of $L_{m,n,o}$ are the following:

{\centering
\small
\begin{tabular}{l@{\extracolsep{40mm}}l}
$\bullet$ $x_{i_1}$, $x_{i_1} x_{i_2}$, and $x_{i_1} x_{i_2} x_{i_3}$ when $m\geq 3$, &
$\bullet$ $2$ when $m\geq 2$, $n\geq 2$ and $o\geq 2$,\\
$\bullet$ $y_{j_1}$, $y_{j_1} y_{j_2}$, and $y_{j_1} y_{j_2} y_{j_3}$  when $n\geq 3$, &
$\bullet$ $-2-x_{i}-y_{j}-z_{k}+x_{i} y_{j} z_{k}$,\\ 
$\bullet$ $z_{k_1}$, $z_{k_1} z_{k_2}$, and $z_{k_1} z_{k_2} z_{k_3}$ when $o\geq 3$, \\ 
\multicolumn{2}{l}{$\bullet$ $x_{i_1}$, $y_{j_1}$, $x_{i_1}+2$, $y_{j_1}+2$, $x_{i_1}+x_{i_2}$, $y_{j_1}+y_{j_2}$, and $x_{i_1} y_{j_1}$ when $ m\geq 2$ and  $n\geq 2$,} \\
\multicolumn{2}{l}{$\bullet$ $x_{i_1}$, $z_{k_1}$, $x_{i_1}+2$, $z_{k_1}+2$,$x_{i_1}+x_{i_2}$, $z_{k_1}+z_{k_2}$, and $x_{i_1}z_{k_1} $ when $m\geq 2$ and $o\geq 2$,} \\
\multicolumn{2}{l}{$\bullet$ $y_{j_1}$, $z_{k_1}$, $y_{j_1}+2$, $z_{k_1}+2$, $y_{j_1}+y_{j_2}$, $z_{k_1}+z_{k_2}$, and $y_{j_1}z_{k_1}$, when $n\geq 2$ and $o\geq 2$,} \\
\multicolumn{2}{l}{$\bullet$ {\scriptsize $y_{j_1}+z_{k_1}+2$, $x_{i_1}(y_{j_1}+1)$, $x_{i_1}(z_{k_1}+1)$, 
$x_{i_1}x_{i_2}+x_{i_1}+x_{i_2}$, $x_{i_1} x_{i_2} y_{j_1}-x_{i_1}-x_{i_2}$, and $x_{i_1} x_{i_2} z_{k_1}-x_{i_1}-x_{i_2}$ when $m\geq 2$,}} \\ 
\multicolumn{2}{l}{ $\bullet$ {\scriptsize $x_{i_1}+z_{k_1}+2$, $y_{j_1}(x_{i_1}+1)$, $y_{j_1}(z_{k_1}+1)$, $y_{j_1}y_{j_2}+y_{j_1}+y_{j_2}$,  
$y_{j_1} y_{j_2} x_{i_1}-y_{j_1}-y_{j_2}$, and $y_{j_1} y_{j_2} z_{k_1}-y_{j_1}-y_{j_2}$ when $n\geq 2$,}}\\ 
\multicolumn{2}{l}{ $\bullet$ {\scriptsize $x_{i_1}+y_{j_1}+2$, $z_{k_1}(x_{i_1}+1)$, $z_{k_1}(y_{j_1}+1)$, $z_{k_1}z_{k_2}+z_{k_1}+z_{k_2}$, 
$z_{k_1} z_{k_2} x_{i_1}-z_{k_1}-z_{k_2}$, and $z_{k_1} z_{k_2} y_{j_1}-z_{k_1}-z_{k_2}$ when $o\geq 2$,}} \\ 
\end{tabular}}

\noindent where $1\leq i_1< i_2< i_3\leq  m$, $1\leq j_1<j_2<j_3\leq n$, and $1\leq k_1< k_2 <k_3\leq o$.
\end{Theorem}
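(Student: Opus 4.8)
The plan is to follow the template of Lemma~\ref{lemma:minorsKmn}, classifying every nonsingular submatrix $L_{m,n,o}[\mathcal{I};\mathcal{J}]$ of size $3$ according to how its three row indices and three column indices split among the three vertex classes. Writing $\mathcal{I}_X=\mathcal{I}\cap[m]$, $\mathcal{I}_Y$, $\mathcal{I}_Z$ and likewise $\mathcal{J}_X,\mathcal{J}_Y,\mathcal{J}_Z$, each of $\mathcal{I}$ and $\mathcal{J}$ determines a composition of $3$ into three nonnegative sizes. I would trim the list of cases with two symmetries. First, since $L_{m,n,o}$ is symmetric, $\det L_{m,n,o}[\mathcal{I};\mathcal{J}]=\det L_{m,n,o}[\mathcal{J};\mathcal{I}]$, so I may assume a fixed inequality between the row-type and the column-type. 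Second, $L_{m,n,o}$ is invariant, up to relabelling the three sets of indeterminates, under the $S_3$-action permuting the diagonal blocks $L(T_m,X_{T_m})$, $L(T_n,Y_{T_n})$, $L(T_o,Z_{T_o})$; hence I only need to compute one representative per orbit and obtain the rest by permuting $\{X,Y,Z\}$ together with $\{m,n,o\}$.

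The evaluation is governed by the rigid block form: every diagonal block is a diagonal matrix of indeterminates and every off-diagonal block equals the constant matrix $-J$. The workhorse, exactly as in the bipartite proof, is that two columns whose indices lie in the same class agree in all entries except in the (at most two) rows of that same class; in particular, if $\mathcal{J}$ has two indices in some class while $\mathcal{I}$ has none there, the submatrix has two equal columns and is singular, and dually for rows. This kills the vast majority of index distributions and bounds the support of every nonzero $3$-minor. A second reduction is provided by Lemma~\ref{lemma:minorsKmn} itself: any submatrix whose indices $\mathcal{I}\cup\mathcal{J}$ meet at most two of the three classes is literally a submatrix of the corresponding bipartite matrix $L_{\bullet,\bullet}$, so its determinant is already on the bipartite list. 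Therefore the only genuinely new determinants come from submatrices meeting all three classes.

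For each surviving distribution I would put the submatrix into a canonical form, up to the row and column permutations that merely change its sign, and evaluate a single small determinant. The fully diagonal configuration $\mathcal{I}=\mathcal{J}=\{i,j,k\}$ with one index in each class yields $x_iy_jz_k-x_i-y_j-z_k-2$; the fully off-diagonal configuration, where $\mathcal{I}$ and $\mathcal{J}$ each take one index from every class but are disjoint within each class, yields the constant $-2$; and the remaining three-class configurations produce the mixed entries such as $x_i+z_k+2$, $x_i(y_j+1)$, and $y_j(z_k+1)$. Normalising each determinant to have positive leading coefficient and recording, for each minor, precisely which classes are forced to contribute two (or three) distinct indices produces the availability hypotheses $m\geq 2$, $n\geq 3$, and so on, attached to the entries of the statement.

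The main obstacle is not any individual determinant, since all are $3\times 3$ and routine, but the sheer bookkeeping: there are many pairs of composition-types for $(\mathcal{I},\mathcal{J})$, and the two symmetries must be applied carefully so that no nonzero minor is missed, none is counted twice, and each is recorded under the correct index condition. Organising the enumeration by the value of $(|\mathcal{I}_X|,|\mathcal{I}_Y|,|\mathcal{I}_Z|)$ after the symmetry reduction, and then checking completeness class by class in the manner of Lemma~\ref{lemma:minorsKmn}, is what keeps the argument finite and verifiable.
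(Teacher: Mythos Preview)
Your approach is essentially the same as the paper's: both reduce to the bipartite Lemma~\ref{lemma:minorsKmn} when $\mathcal{I}\cup\mathcal{J}$ meets at most two classes, kill the degenerate distributions via the ``two equal columns'' observation, and then evaluate the handful of genuinely three-class $3\times3$ determinants (yielding $ABC-A-B-C-2$, $AB-A$, $AB-B$, and $-AA'-A-A'$ as the paper records). Your explicit use of the $S_3$-symmetry permuting the three blocks is a mild organizational improvement over the paper, which handles the symmetric cases by saying ``similarly when $\mathcal{J}_1=\emptyset$ and the other cases,'' but the content is the same.
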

\begin{proof}
We will follow a similar proof to the proof given for lemma~\ref{lemma:minorsKmn}.
Let $\mathcal{I}=\{i_1, i_2, i_3\}$ with $1\leq i_1< i_2< i_3\leq m+n+o$ and $\mathcal{J}=\{j_1,j_2,j_3\}$ with $1\leq j_1<j_2<j_3\leq m+n+o$.
Moreover, let $\mathcal{I}_1=\mathcal{I}\cap [m]$, $\mathcal{I}_2=\mathcal{I}\cap \{m+1,\ldots,m+n\}$, $\mathcal{I}_3=\mathcal{I}\cap \{m+n+1,\ldots,m+n+o\}$, 
$\mathcal{J}_1=\mathcal{J}\cap [m]$, $\mathcal{J}_2=\mathcal{J}\cap \{m+1,\ldots,m+n\}$, $\mathcal{J}_3=\mathcal{J}\cap \{m+n+1,\ldots,m+n+o\}$.
Also, in the following $i'_t=i_t-m$, $i''_t=i_t-m-n$, $j'_t=j_t-m$ and $j''_t=j_t-m-n$, for $t\in[3]$.

Let $L=L_{m,n,o}[\mathcal{I};\mathcal{J}]$.
First, in the same way that in the proof of lemma~\ref{lemma:minorsKmn} we can assume that $L$ is non-singular.
Several of the $3$-minor of $L_{m,n,o}$ can be calculated using lemma~\ref{lemma:minorsKmn}.
For instance, if $\mathcal{I}_i=\mathcal{J}_i=\emptyset$ for some $i=1,2,3$, then $L$ is a submatrix of  $L(K_{m,n},\{ X_{T_m}, Y_{T_n}\})$
and the corresponding $3$-minor can be calculated using lemma~\ref{lemma:minorsKmn}.
Therefore we can assume that, if $\mathcal{I}_i=\emptyset$, then $\mathcal{J}_i\neq \emptyset$ for all $i=1,2,3$.
Moreover, if $\mathcal{I}_i=\emptyset$, then $|\mathcal{J}_i|=1$ for all $i=1,2,3$. 
Because otherwise either $L$ will have two identical columns; a contradiction to the fact that $L$ is non-singular. 
In a similar way, if $\mathcal{J}_i=\emptyset$, then $|\mathcal{I}_i|=1$ for all $i=1,2,3$.
If $|\mathcal{I}_i|=3$ for some $i=1,2,3$, then $L$ is a  submatrix of the generalized Laplacian matrix of a complete bipartite graph.
Therefore we can assume that $|\mathcal{I}_i|\leq 2$ and $|\mathcal{J}_i|\leq 2$ for all $i=1,2,3$.

The first case that we need to consider is when $\mathcal{I}_i\neq \emptyset \neq \mathcal{J}_i$ for all $1\leq i\leq 3$, that is, $|\mathcal{I}_i|= |\mathcal{J}_i|=1$ for all $1\leq i\leq 3$.
In this case we have that
\[ 
L=\left[
	\begin{array}{ccccccccc}
		A & -1 & -1\\
		-1 & B & -1\\
		-1 & -1 & C\\
	\end{array}
\right],
\]
where $A$ is equal to $0$ (when $m\geq 2$) or $x_{i_1}$, $B$ is equal to $0$ (when $n\geq 2$) or $y_{i'_2}$, 
and $C$ is equal to $0$ (when $n\geq 2$) or $z_{i''_3}$.
Since $\det L=ABC-A-B-C-2$ we get eight of the $3$-minors of $L_{m,n,o}$.  
Since $|\mathcal{I}_i| \leq 2$ ($|\mathcal{J}_i| \leq 2$) for all $i=1,2,3$, then there are no two $\mathcal{I}$'s ($\mathcal{J}$'s) empty.
Therefore only remains the cases: when only one of the $\mathcal{I}$'s is empty and the case when one of the $\mathcal{I}$'s is empty and one of the $\mathcal{J}'s$ is empty.

Consider the case when only one of the sets $\mathcal{I}$'s is empty, that is, $|\mathcal{J}_i|=1$ for all $i=1,2,3$.
Assume that $\mathcal{I}_3=\emptyset$. 
Then we need to consider the following two matrices (when $|\mathcal{I}_1|=1$ and $|\mathcal{I}_1|=2$):
\begin{center}
\begin{tabular}{c@{\extracolsep{5mm}}c@{\extracolsep{5mm}}c}
$
L_1=
\left[
	\begin{array}{ccccccccc}
		A & -1 & -1\\
		-1 & 0 & -1\\
		-1 & B & -1\\
	\end{array}
\right] 
$
&
and
&
$
L_2=
\left[
	\begin{array}{ccccccccc}
		A & -1 & -1\\
		0 & -1 & -1\\
		-1 & B & -1\\
	\end{array}
\right], 
$
\end{tabular}
\end{center}
where $A$ is equal to $0$ (when $m\geq 2$ and $m\geq 3$, respectively) or $x_{i'_1}$ and 
$B$ is equal to $0$ (when $n\geq 3$ and $n\geq 2$, respectively) or $y_{i'_3}$.
It is not difficult to see that $\det(L_1)=AB-B$ and $\det(L_2)=AB-A$.
Thus, we get the minors $x_{i_1}y_{i'_3}-y_{i'_3}$ (when $n\geq 2$), $x_{i_1}y_{i'_3}-x_{i_1}$ (when $m\geq 2$), 
$-y_{i'_3}$ and $-x_{i_1}$ (when $m\geq 2$ and $n\geq 2$).
We get similar $3$-minors when $\mathcal{I}_2=\emptyset$ or $\mathcal{I}_1=\emptyset$.

Finally, consider the case when one of the $\mathcal{I}$'s is empty and one of the $\mathcal{J}'s$ is empty.
Assume that $\mathcal{I}_3=\emptyset$ and $\mathcal{J}_2=\emptyset$.
Then $|\mathcal{I}_2|=1$ and $L$ is equal to:
\[
\left[
	\begin{array}{ccccccccc}
		A & 0 & -1\\
		0 & A' & -1\\
		-1 & -1 & -1\\
	\end{array}
\right],
\]
where $A$ is equal to $0$ or $x_{i'_1}$ and $A'$ is equal to $0$ or $x_{i_2}$.
Clearly $\det L=-AA'-A-A'$.
Thus we get the $3$-minors $x_{i_1}x_{i_2}+x_{i_1}+x_{i_2}$ (when $m\geq 2$) and $x_{i_1}$ and $x_{i_2}$ (when $m\geq 3$).
Similarly when $\mathcal{J}_1=\emptyset$ and the other cases.
\end{proof}
The rest it follows by similar arguments to those in the case of the bipartite complete graph.
\end{proof}

\begin{proof}[Proof of theorem~\ref{teo:main1.3}]
Following the proof of theorem~\ref{teo:main1.2} we need to find the $3$-minors of the generalized Laplacian matrix of $T_n\vee (K_m+K_o)$.
We begin with $K_m\vee T_n$ and after that we do the same for $T_n\vee (K_m+K_o)$.
We omit the proofs of lemma~\ref{lemma:minorsJnm} and theorem~\ref{teo:minorsJmno}
because are rutinary and both follows by using similar arguments to those in 
lemma~\ref{lemma:minorsKmn} and in theorem~\ref{teo:minorsKmno}, respectively.

\begin{Lemma}\label{lemma:minorsJnm}
For $m,n\geq 1$, let $L'_{m,n}$ be the generalized Laplacian matrix of $K_m\vee T_n$. 
That is, 
\[
L'_{m,n}=L(K_m\vee T_n,\{ X_{K_n}, Y_{T_m}\})=\left[
\begin{array}{ccccccccc}
	L(K_{m},X_{K_{m}}) & -J_{m, n}\\
	-J_{n,m} & L(T_{n},Y_{T_{n}})\\
\end{array}
\right].
\]
Then the $3$-minors (with positive leading coefficient) of $L'_{m,n}$ are the following:

{\centering
\small
\begin{tabular}{l@{\extracolsep{2mm}}l} 
$\bullet$ {\scriptsize $y_{j_1}$, $y_{j_1} y_{j_2}$, and $y_{j_1} y_{j_2} y_{j_3}$ when $n\geq 3$}, &
$\bullet$ {\scriptsize $(x_{i_1}\!+\!1)(x_{i_2}\!+\!1)$, $(x_{i_1}\!+\!1)(y_{i_1}\!+\!1)$, and $x_{i_1} x_{i_2} x_{i_3}\!-\!x_{i_1}\!-\!x_{i_2}\!-\! x_{i_3}\!-\!2$ when $m\geq 3$},\\
$\bullet$ $x_{i_1} y_{j_1} y_{j_2}\!-\!y_{j_1}\!-\!y_{j_2}$  when $n\geq 2$, &
$\bullet$ $x_{i_1} x_{i_2} y_{j_1}\!-\!x_{i_1}\! -\! x_{i_2}\! -\! y_{j_1}\!-\!2$  when $m\geq 2$, \\
$\bullet$ $y_{j_1}$ when $m\geq 2$ and $n\geq 3$, &
$\bullet$ $x_{i_1}+1$ when $m\geq 3$ and $n\geq 2$, \\
\multicolumn{2}{l}{$\bullet$ $x_{i_1} + x_{i_2}+2$, $x_{i_1} + y_{j_1}$, $x_{i_1} y_{j_1}y_{j_2}$ and $y_{j_1}y_{j_2}+y_{j_1}+y_{j_2}$ when $m\geq 2$ and $n\geq 2$,}\\
\end{tabular}}

\noindent where $1\leq i_1< i_2< i_3\leq  m$ and $1\leq j_1<j_2<j_3\leq n$.
\end{Lemma}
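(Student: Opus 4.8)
The plan is to imitate verbatim the strategy of Lemma~\ref{lemma:minorsKmn}, since $L'_{m,n}$ differs from the bipartite matrix $L_{m,n}$ only in that its first diagonal block is the complete-graph block $L(K_m, X_{K_m})$ (with $-1$ in every off-diagonal position) instead of the trivial block $L(T_m, X_{T_m})$ (with $0$ off the diagonal). First I would fix index sets $\mathcal{I} = \{i_1 < i_2 < i_3\}$ and $\mathcal{J} = \{j_1 < j_2 < j_3\}$ in $[m+n]$, write $\mathcal{I}_1 = \mathcal{I} \cap [m]$, $\mathcal{I}_2 = \mathcal{I} \cap \{m+1, \dots, m+n\}$ and likewise for $\mathcal{J}$, and set $L = L'_{m,n}[\mathcal{I}; \mathcal{J}]$. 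Because $L'_{m,n}$ is symmetric I would assume $|\mathcal{I}_2| \le |\mathcal{J}_2|$, which reduces the admissible shapes to finitely many cases indexed by the pair $(|\mathcal{I}_1|, |\mathcal{I}_2|)$ together with $(|\mathcal{J}_1|, |\mathcal{J}_2|)$.

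Next I would discard the singular configurations exactly as before. Two distinct rows coming from the trivial block $Y$ that avoid both of their diagonal columns are identical (all $-1$ in $X$-columns, all $0$ in $Y$-columns), and two distinct rows from the complete block $X$ that avoid both of their diagonal columns are also identical (all $-1$). Hence a non-singular $L$ forces each block's contribution to ``cover'' enough diagonal positions, which bounds $|\mathcal{I}_i|, |\mathcal{J}_i| \le 2$ and eliminates the degenerate column and row patterns. This step is essentially identical to the bipartite argument, except that the complete-block rows are all $-1$ rather than all $0$ when their diagonals are omitted.

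Then I would run through the surviving shapes and compute each determinant directly, reducing $L$ in every case to a small parametrized matrix in which a diagonal entry equals $x_i$ (resp.\ $y_j$) when the chosen row and column hit that diagonal, and equals $0$ when the block is large enough to supply an off-diagonal witness; the hypotheses $m \ge 2$, $m \ge 3$, $n \ge 2$, $n \ge 3$ in the statement record precisely when such a witness is available. For instance the all-$X$ diagonal submatrix on $\{i_1, i_2, i_3\}$ gives $x_{i_1}x_{i_2}x_{i_3} - x_{i_1} - x_{i_2} - x_{i_3} - 2$; replacing one $X$-column by a $Y$-column (or by a further $X$-column) turns one diagonal $x$ into $-1$ and yields $(x_{i_1}+1)(x_{i_2}+1)$ and $(x_{i_1}+1)(y_{j_1}+1)$; the mixed submatrix on $\{i_1, i_2, j_1\}$ gives $x_{i_1}x_{i_2}y_{j_1} - x_{i_1} - x_{i_2} - y_{j_1} - 2$; and the configurations with two or three indices in the trivial block reproduce the bipartite-type minors $x_{i_1}y_{j_1}y_{j_2} - y_{j_1} - y_{j_2}$, $y_{j_1}y_{j_2} + y_{j_1} + y_{j_2}$, $y_{j_1}y_{j_2}y_{j_3}$, and the remaining short minors $y_{j_1}$, $x_{i_1}+1$, $x_{i_1}+x_{i_2}+2$, $x_{i_1}+y_{j_1}$, $x_{i_1}y_{j_1}y_{j_2}$. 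Collecting these and normalizing to positive leading coefficient reproduces the stated list.

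The routine but genuinely delicate part is the bookkeeping of the constant terms and the size hypotheses: because the $X$-block is complete, every off-diagonal $-1$ that survives into a submatrix contributes additional $-1$'s and the extra $-2$ that distinguish, say, $x_{i_1}x_{i_2}y_{j_1} - x_{i_1} - x_{i_2} - y_{j_1} - 2$ here from its bipartite analogue $x_{i_1}x_{i_2}y_{j_1} - x_{i_1} - x_{i_2}$ in Lemma~\ref{lemma:minorsKmn}. I expect the main obstacle to be verifying that no admissible shape has been overlooked and that each $m \ge 2$ versus $m \ge 3$ (and $n \ge 2$ versus $n \ge 3$) threshold is attached to the correct minor, exactly as the analogous thresholds were tracked in the proof of Theorem~\ref{teo:minorsKmno}.
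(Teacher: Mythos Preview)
Your proposal is correct and matches the paper's approach exactly: the paper omits the proof of this lemma entirely, stating only that it ``follows by using similar arguments to those in lemma~\ref{lemma:minorsKmn},'' which is precisely the strategy you outline. Your sketch of the case analysis, the symmetry reduction $|\mathcal{I}_2|\le|\mathcal{J}_2|$, the singularity eliminations, and the sample determinant computations are all in line with what the paper implicitly expects.
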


\begin{Theorem}\label{teo:minorsJmno}
For $m,n,o\geq 1$, let $L'_{m,n,o}$ be the the generalized Laplacian matrix of $T_n\vee (K_m+K_o)$.
That is,
\[\small
L'_{m,n,o}=L(T_n\vee (K_m+K_o),\{X_{K_{m}},Y_{T_{n}},Z_{K_{o}}\})=
\left[
\begin{array}{ccccccccc}
	L(K_{m},X_{K_{m}}) & -J_{m, n} & {\bf 0}_{m, o}\\
	-J_{n, m} & L(T_{n},Y_{T_{n}}) & -J_{n, o}\\
	{\bf 0}_{o, m} & -J_{o, n} & L(K_{o},Z_{K_{o}})
\end{array}
\right].
\]
Then the $3$-minors (with positive leading coefficient) of $L'_{m,n,o}$ are the following:

{\centering
\small
\begin{tabular}{l@{\extracolsep{30mm}}l}
$\bullet$ $x_{i_1}+1$ when $m\geq 3$ and ($o\geq 2$ or $n\geq 2$), &
$\bullet$ $z_{k_1}+1$ when $o\geq 3$ and ($m\geq 2$ or $n\geq 2$),\\
$\bullet$ $y_{j_1}$ when $n\geq 3$ and ($m\geq 2$ or $o\geq 2$),&
$\bullet$ $2$ when $m\geq 2$, $n\geq 2$, and $o\geq 2$,\\
$\bullet$ $y_{j_1}$, $y_{j_1} y_{j_3}$, and $y_{j_1} y_{j_2} y_{j_3}$  when $n\geq 3$,&
$\bullet$ $x_{i_1} y_{j_1} z_{k_1}-x_{i_1}-z_{k_1}$,\\

\multicolumn{2}{l}{$\bullet$ $x_{i_1}+1$, $z_{k_1}(x_{i_1}+1)$, $(x_{i_1}+1)(x_{i_2}+1)$, $(x_{i_1}+1)(y_{j_1}+1)$,
and $x_{i_1} x_{i_2}x_{i_3}-x_{i_1}-x_{i_2}-x_{i_3}-2$, when $m\geq 3$,}\\
\multicolumn{2}{l}{$\bullet$ $z_{k_1}+1$, $x_{i_1}(z_{k_1}+1)$,  $(z_{k_1}+1)(z_{k_2}+1)$, $(z_{k_1}+1)(y_{j_1}+1)$,
and $z_{k_1} z_{k_2}z_{k_3}-z_{k_1}-z_{k_2}-z_{k_3}-2$ when $o\geq 3$,}\\

\multicolumn{2}{l}{$\bullet$ $x_{i_1}+z_{k_1}$, $y_{j_1}+y_{j_2}$, $x_{i_1} y_{j_1}$, $y_{j_1}z_{k_1}$, $x_{i_1} y_{j_1} y_{j_2}-y_{j_1}-y_{j_2}$, 
and $y_{j_1} y_{j_2}z_{k_1} -y_{j_1}-y_{j_2}$ when $n\geq 2$,}\\
\multicolumn{2}{l}{$\bullet$ {\footnotesize $x_{i_1}+1$, $x_{i_1} x_{i_2}-1$, $y_{j_1} z_{k_1}+z_{k_1}-1$, $z_{k_1}(x_{i_1}+1)$, $x_{i_1} x_{i_2}z_{k_1}-z_{k_1}$,
and $x_{i_1} x_{i_2} y_{j_1}-x_{i_1}-x_{i_2}-y_{j_1}-2$}, when $m\geq 2$,}\\
\multicolumn{2}{l}{$\bullet$ {\footnotesize $z_{k_1}+1$, $x_{i_1}(z_{k_1} z_{k_2}-1)$, $z_{k_1} z_{k_2}-1$, $x_{i_1}y_{j_1}+x_{i_1}-1$, $x_{i_1}(z_{k_1}+1)$,
and $z_{k_1} z_{k_2} y_{j_1}-z_{k_1}-z_{k_2}-y_{j_1}-2$}, when $o\geq 2$,}\\

\multicolumn{2}{l}{$\bullet$ $x_{i_1}+1$, $y_{j_1}+2$, $z_{k_1}+1$, $x_{i_1} x_{i_2}-1$, and $z_{k_1} z_{k_2}-1$ when $m\geq 2$ and $o\geq 2$,}\\
\multicolumn{2}{l}{$\bullet$ $x_{i_1}+1$, $y_{j_1}$, $z_{k_1}-1$, $x_{i_1}+y_{j_1}$, $x_{i_1}+x_{i_2}+2$, $y_{j_1}(x_{i_1}+1)$,
and $y_{j_1} y_{j_2}+y_{j_1}+y_{j_2}$  when $m\geq 2$ and $n\geq 2$,}\\
\multicolumn{2}{l}{$\bullet$ $x_{i_1}-1$, $y_{j_1}$, $z_{k_1}+1$, $z_{k_1}+y_{j_1}$, $z_{k_1}+z_{k_2}+2$, $y_{j_1}(z_{k_1} +1)$, and
$y_{j_1} y_{j_2}+y_{j_1}+y_{j_2}$, when $n\geq 2$ and $o\geq 2$,}\\
\end{tabular}}

\noindent where $1\leq i_1< i_2< i_3\leq  m$, $1\leq j_1<j_2<j_3\leq n$, and $1\leq k_1< k_2 <k_3\leq o$.
\end{Theorem}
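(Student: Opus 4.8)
The plan is to follow verbatim the template of the proof of Theorem~\ref{teo:minorsKmno}, exploiting that $L'_{m,n,o}$ has exactly the same $3\times 3$ block shape; the only structural differences are that the two outer diagonal blocks are now the complete-graph Laplacians $L(K_m,X)$ and $L(K_o,Z)$ (hence off-diagonal entries $-1$ instead of $0$), and that the corners joining the $K_m$ part to the $K_o$ part are the zero matrices $\mathbf{0}_{m,o}$, $\mathbf{0}_{o,m}$ rather than $-J$. First I would install the same bookkeeping: write $\mathcal{I}=\{i_1,i_2,i_3\}$, $\mathcal{J}=\{j_1,j_2,j_3\}$, split each into $\mathcal{I}_1,\mathcal{I}_2,\mathcal{I}_3$ and $\mathcal{J}_1,\mathcal{J}_2,\mathcal{J}_3$ according to the three diagonal blocks, introduce the shifted indices $i'_t,i''_t,j'_t,j''_t$, and restrict attention to nonsingular $L=L'_{m,n,o}[\mathcal{I};\mathcal{J}]$. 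Because a nonsingular submatrix repeats no column, a block met by no row can be met by at most one column and vice versa, so $|\mathcal{I}_t|,|\mathcal{J}_t|\le 2$ and the admissible block profiles are few. Here I would use two symmetries to shrink the case list: transposition (as $L'_{m,n,o}$ is symmetric) lets me assume, say, $|\mathcal{I}_2|\le|\mathcal{J}_2|$, and the involution exchanging the $K_m$ and $K_o$ blocks (swapping $X\leftrightarrow Z$, reflecting $T_n\vee(K_m+K_o)=T_n\vee(K_o+K_m)$) pairs up the $x$-minors with the $z$-minors in the statement.

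Next I would dispose of the degenerate profiles by reduction to results already in hand. If $\mathcal{I}_3=\mathcal{J}_3=\emptyset$ then $L$ is a submatrix of the generalized Laplacian of $K_m\vee T_n$, so $\det L$ is one of the minors of Lemma~\ref{lemma:minorsJnm}; by the block-swap symmetry $\mathcal{I}_1=\mathcal{J}_1=\emptyset$ returns the $K_o\vee T_n$ minors, while $\mathcal{I}_2=\mathcal{J}_2=\emptyset$ leaves a block-diagonal submatrix of $L(K_m)\oplus L(K_o)$ (the zero corners force block-diagonality), whose determinant factors into complete-graph minors as in Lemma~\ref{lemm:corrvaldisjoint} and the proof of Theorem~\ref{teo:gamma1}. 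The genuinely new computations are the profiles meeting all three blocks. The decisive one is $|\mathcal{I}_t|=|\mathcal{J}_t|=1$ for $t=1,2,3$: writing $A$, $B$, $C$ for the diagonal entries from the first, middle, and last block (so $A\in\{x_{i_1},-1\}$ with $-1$ allowed when $m\ge2$, $B\in\{y_{j_1},0\}$ with $0$ allowed when $n\ge2$, and $C\in\{z_{k_1},-1\}$ with $-1$ allowed when $o\ge2$), the submatrix is
\[
\begin{bmatrix} A & -1 & 0 \\ -1 & B & -1 \\ 0 & -1 & C \end{bmatrix},
\]
whose determinant is $ABC-A-C$. The two zeros coming from the empty $K_m$--$K_o$ corners are exactly what make this specialize to $x_{i_1}y_{j_1}z_{k_1}-x_{i_1}-z_{k_1}$, rather than the fully symmetric $ABC-A-B-C-2$ that appeared for $K_{m,n,o}$. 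The remaining mixed profiles (one empty $\mathcal{I}_t$, or one empty $\mathcal{I}_t$ and one empty $\mathcal{J}_s$) are enumerated identically: each is a $3\times 3$ determinant with entries in $\{0,\pm1,x_{i},y_{j},z_{k}\}$, and expanding them produces the factored and cubic forms $(x_{i_1}+1)(x_{i_2}+1)$, $(x_{i_1}+1)(y_{j_1}+1)$, $z_{k_1}(x_{i_1}+1)$, $x_{i_1}x_{i_2}x_{i_3}-x_{i_1}-x_{i_2}-x_{i_3}-2$, and their $z$-analogues, recorded under the side conditions that make the relevant block nonempty.

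I expect the main obstacle to be organizational rather than conceptual. Because the two outer blocks are complete graphs, their $\pm1$ off-diagonal entries spawn the factored products and the cubics absent from Theorem~\ref{teo:minorsKmno}; and because the zero corners break the full $S_3$ symmetry of the complete tripartite case down to the single $K_m\leftrightarrow K_o$ involution, the profile list is longer and each entry must be kept in exact correspondence with its side condition ($m\ge2$ versus $m\ge3$, and so on). The care needed is to verify that every nonsingular $3\times3$ submatrix lands in precisely one enumerated shape and that collisions of equal indices are attributed to the looser inequalities; this is exactly the verification the authors describe as routine and defer to the pattern of Lemma~\ref{lemma:minorsKmn} and Theorem~\ref{teo:minorsKmno}.
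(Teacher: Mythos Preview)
Your proposal is correct and matches the paper's approach: the authors explicitly omit the proof of Theorem~\ref{teo:minorsJmno}, stating only that it ``follows by using similar arguments to those in lemma~\ref{lemma:minorsKmn} and in theorem~\ref{teo:minorsKmno},'' which is precisely the template you describe. Your outline in fact supplies more detail than the paper does, including the correct key determinant $ABC-A-C$ for the $|\mathcal{I}_t|=|\mathcal{J}_t|=1$ profile and the reduction to Lemma~\ref{lemma:minorsJnm} when one outer block is absent.
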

\vspace{-7mm}
\end{proof}
%

Theorems~\ref{teo:main1.2} and~\ref{teo:main1.3} implies that ${\bf Forb}(\Gamma_{\leq 2})=\mathcal F_2$.
Now, we present the non-conected version of theorem~\ref{teo:main1}.

\begin{Corollary}\label{cor:main2}
A simple graph has algebraic co-rank equal to two if and only if is the disjoint union of a trivial graph with one of the following graphs:
\begin{itemize}
    	\item $K_{m,n,o}$, where $m\geq 2$, $n+o\geq 1$,
    	\item $T_n\vee (K_m+K_o)$, where $m, o\geq 2$, $m,n, o\geq 1$, or $n\geq 2$ and $m+o\geq 1$. 
\end{itemize}
\end{Corollary}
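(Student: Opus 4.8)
The plan is to derive Corollary~\ref{cor:main2} from the connected classification of Theorem~\ref{teo:main1} together with the additivity of the algebraic co-rank over disjoint unions. Recall from the remark after Lemma~\ref{lemm:corrvaldisjoint} that $\gamma(G+H)=\gamma(G)+\gamma(H)$ whenever $G$ and $H$ are vertex disjoint, and from Theorem~\ref{teo:gamma1} (together with the description of $\Gamma_{\leq 0}$) that a connected graph $H$ has $\gamma(H)=0$ exactly when $H=T_1$ and $\gamma(H)=1$ exactly when $H=K_r$ with $r\geq 2$. Hence every nontrivial connected component contributes at least $1$ to $\gamma$, and an isolated vertex contributes $0$.

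For the forward implication I would write $G=G_1+\cdots+G_s$ as its connected components, so that additivity gives $\sum_i\gamma(G_i)=2$. Collecting the single-vertex components into a trivial graph $T_j$, there are at most two nontrivial components, and two subcases arise. If there is a single nontrivial component $H$ with $\gamma(H)=2$, then $G=H+T_j$ and, by Theorem~\ref{teo:main1}, $H$ is an induced subgraph of $K_{m,n,o}$ or of $T_n\vee(K_m+K_o)$. Here I would note that both families are closed under taking induced subgraphs: deleting vertices from $K_{m,n,o}$ yields a complete multipartite graph on at most three parts, and deleting vertices from $T_n\vee(K_m+K_o)$ yields $T_{n'}\vee(K_{m'}+K_{o'})$. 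Thus $H$ is itself of one of the two forms, and since $\gamma(H)=2>1$ Theorem~\ref{teo:gamma1} forces $H$ to be non-complete; the explicit ideals of Theorems~\ref{teo:main1.2} and~\ref{teo:main1.3} then show $I_3(H)\neq\langle 1\rangle$ precisely under the parameter ranges stated in the corollary. In the remaining subcase there are two nontrivial components, each with $\gamma=1$, so by Theorem~\ref{teo:gamma1} each is complete: $G=K_m+K_o+T_j$ with $m,o\geq 2$, which is the disconnected block $T_0\vee(K_m+K_o)$ disjoint from $T_j$ and therefore fits the second family.

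For the converse I would use additivity once more, the trivial summand contributing $0$. For each listed building block $B$, Theorem~\ref{teo:main1} gives $\gamma(B)\leq 2$, while non-completeness (each such $B$ contains an induced $P_3$, so $\gamma(B)\geq\gamma(P_3)=2$) yields equality; the disconnected block $K_m+K_o$ is handled directly by additivity and Theorem~\ref{teo:gamma1}. The main obstacle is the bookkeeping of the parameter ranges: one must check that the constraints in the statement separate $\gamma=2$ exactly from the degenerate complete and complete-bipartite cases, which either have $\gamma\leq 1$ or already appear inside the $K_{m,n,o}$ family, so that the two families do not double count the same graph. This is precisely the role of the hypothesis in Theorem~\ref{teo:main1.3} that $T_n\vee(K_m+K_o)$ be neither complete nor complete bipartite, and verifying it is where the $I_3$ computations of Theorems~\ref{teo:main1.2} and~\ref{teo:main1.3} are genuinely needed rather than the mere membership in $\Gamma_{\leq 2}$.
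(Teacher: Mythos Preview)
Your argument is correct. The route you take differs in presentation from the paper's very terse proof, but the underlying reduction is the same. The paper phrases the passage to the non-connected case in forbidden-subgraph language: one simply enlarges $\mathcal{F}_2$ by the two disconnected obstructions $P_3+P_2$ and $3P_2$ (both have $\gamma=3$, and any proper induced subgraph has $\gamma\leq 2$), and then invokes Theorem~\ref{teo:main1}. You instead use the additivity $\gamma(G+H)=\gamma(G)+\gamma(H)$ from Lemma~\ref{lemm:corrvaldisjoint} directly to count how the co-rank is distributed over components. These are two sides of the same coin---the paper's new obstructions are forbidden precisely \emph{because} of additivity---so neither approach is really more general, but yours is more self-contained and makes explicit the case split (one component with $\gamma=2$ versus two complete components each contributing $1$) that the paper leaves implicit. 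Your observation that the two model families are closed under deleting vertices, so that an induced subgraph of $K_{m,n,o}$ or of $T_n\vee(K_m+K_o)$ is again of that shape, is the clean way to pass from ``induced subgraph of'' in Theorem~\ref{teo:main1} to the explicit parameter description in the corollary; the paper does not spell this out.
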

\begin{proof}
It is not difficult to see that in the non-connected case we need to add the 
graphs $P_3+P_2$ and $3P_2$ to the set of forbidden graphs.
The rest follows directly from theorem~\ref{teo:main1}.
\end{proof}

We finish this section with the classification of the graphs having critical group with 2 invariant factors equal to one.

\begin{Theorem}\label{teo:main2}
The critical group of a connected simple graph has exactly two invariant factor equal to $1$ if and only if is one of the following graphs:
\begin{itemize}
\item $K_{m,n,o}$, where $m\geq n\geq o$ satisfy one of the following conditions:
\begin{itemize}
\renewcommand{\labelitemii}{$\ast$}
	\item $m, n, o\geq 2$ with the same parity,
	\item $m, n\geq 3$, $o=1$, and $\gcd(m+1,n+1)\neq 1$,
	\item $m\geq 2$, $n=o=1$, 
	\item $m, n\geq 2$, $o=0$ and $\gcd(m,n)\neq 1$,
	\item $m\geq 2$, $n=2$, and $o=0$, or
	\item $m=2$ and $n=1$.
\end{itemize}
\item $T_n\vee (K_m+K_o)$, where $m\geq o$ and $n$ satisfy one of the following conditions:
\begin{itemize}
\renewcommand{\labelitemii}{$\ast$}
	\item $m, n, o\geq 2$ with the same parity,
	\item $m, o\geq 2$, $n=1$, and $\gcd(m+1,o+1) \neq 1$,
	\item $m, n\geq 2$, $o=1$, and $\gcd(m+1,n-1)\neq 1$,
	\item $m\geq 1$, $n=o=1$, 
	\item $n\geq 1$, $m=o=1$, 
	\item $m, n\geq 3$, $o=0$, and $\gcd(m,n)\neq 1$,
	\item $m\geq 2$, $n= 2$, $o=0$, or
	\item $m=2$, $n\geq 2$, $o=0$.
\end{itemize}
\end{itemize}
\end{Theorem}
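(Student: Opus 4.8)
The plan is to reduce the computation of $f_1(G)$ to an evaluation of the third critical ideal at the degree sequence of $G$. Recall from \cite[theorems 3.6 and 3.7]{corrval} that substituting $x_v=d_G(v)$ into $I_i(G,X_G)$ produces the $i$-th determinantal ideal over $\mathbb{Z}$ of the Laplacian matrix $L(G)$, and that this ideal is generated by the product $d_1\cdots d_i$ of the first $i$ invariant factors. Hence $f_1(G)\geq k$ if and only if $I_k(G,X_G)\big|_{x_v=d_G(v)}=\langle 1\rangle$, so that $f_1(G)=2$ holds precisely when the evaluated second critical ideal is trivial while the evaluated third one is not. The whole argument is organized around verifying these two conditions for the relevant graphs.

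First I would use $\mathcal{G}_2\subseteq\Gamma_{\leq 2}$ together with theorem~\ref{teo:main1}: any graph with $f_1=2$ lies in $\Gamma_{\leq 2}$, and since both $K_{m,n,o}$ and $T_n\vee(K_m+K_o)$ are closed under induced subgraphs \emph{within} their own parametrized families, every candidate is itself of one of these two shapes. This restricts the search to these two families. For the graphs that are neither complete nor (in the join case) complete bipartite, the polynomial ideal $I_2(G,X_G)$ is already trivial: such a graph contains $P_3$ but lies in $\Gamma_{\leq 2}$, so $\gamma(G)=2$; consequently the evaluated $I_2$ is trivial and $f_1(G)\geq 2$ automatically. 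The entire question thus collapses to deciding when the evaluated third critical ideal is nontrivial, and for the converse direction the same evaluation pins down the admissible parameters.

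The decisive step is to carry out this evaluation case by case, following the branch structure of theorems~\ref{teo:main1.2} and~\ref{teo:main1.3}. A vertex of $K_{m,n,o}$ in the parts of sizes $m,n,o$ has degrees $n+o$, $m+o$, $m+n$, respectively, while in $T_n\vee(K_m+K_o)$ the vertices of $K_m$, $T_n$, $K_o$ have degrees $m+n-1$, $m+o$, $o+n-1$. Substituting these values into the displayed generators of $I_3$ reduces each ideal of $\mathbb{Z}$ to a single gcd. For instance, in the case $m,n,o\geq 2$ of \eqref{I3K} one gets $\langle 2,\,n+o,\,m+o,\,m+n\rangle=\langle\gcd(2,m+n,n+o,m+o)\rangle$, which is nontrivial exactly when $m,n,o$ share a common parity; and in the case $m\geq 2$, $n\geq 2$, $o=1$ one gets $\langle n+1,\,m+1,\,m+n+2\rangle=\langle\gcd(m+1,n+1)\rangle$, nontrivial exactly when $\gcd(m+1,n+1)\neq 1$. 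Each remaining branch yields, after the analogous reduction, one of the parity or gcd conditions recorded in the statement.

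The main obstacle is not any single evaluation but the bookkeeping needed to match every branch to the statement, together with the degenerate boundary cases. Several generators of $I_3$ are nonlinear in a pair of variables (for example $x_1x_2+x_1+x_2$, $y_1y_2+y_1+y_2$, or $z_1y_1+z_1-1$), and I must check that, once evaluated, these still reduce modulo the linear generators present to the claimed gcd. In parallel, the small-parameter graphs require separate attention: complete graphs have $f_1=1$ and must be discarded; complete bipartite graphs appear as the $o=0$ slice of $K_{m,n,o}$ and again as certain joins; and a few graphs lie in both families at once. Reconciling these overlaps—verifying that the two parameter lists are exhaustive and non-redundant, so that no $f_1=2$ graph is omitted and none is counted twice, with particular care at the boundaries where the branch hypotheses of \eqref{I3K} and \eqref{I3L} meet the threshold conditions of the statement—is the delicate part of the argument.
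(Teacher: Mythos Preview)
Your approach is correct and coincides with the paper's: the paper's proof consists of the single line ``It turns out from theorems~\ref{teo:main1.2} and~\ref{teo:main1.3},'' which is precisely your plan of evaluating the explicit generators of $I_3$ at the degree sequence and reading off the gcd/parity conditions branch by branch. Your proposal is essentially a fleshed-out version of that one-line argument, including the reduction to the two families via theorem~\ref{teo:main1} and the bookkeeping at the degenerate boundaries.
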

\begin{proof}
It turns out from theorems~\ref{teo:main1.2} and~\ref{teo:main1.3}.
\end{proof}


\section{The set $\mathbf{Forb}(\Gamma_{\leq k})$.}
\label{sec:For}
The characterization of the $\gamma$-critical graphs with a given algebraic co-rank, ${\bf Forb}(\Gamma_{\leq k})$, is very important.
For instance, we were able to characterize $\Gamma_{\leq k}$ for $k$ equal to $1$ and $2$ because
we got a finite set of $\gamma$-critical graphs with algebraic co-rank equal to $k+1$ (for $k$ equal to $1$ and $2$), and
after that we proved that all the graphs that do not contain a graph from this set as an induced subgraph
has algebraic co-rank less or equal to $k$.
In this section we give two infinite families of forbidden simple graphs.
This will prove that ${\bf Forb}(\Gamma_{\leq k})$ is not empty for all $k\geq 0$.
Moreover, we conjecture that ${\bf Forb}(\Gamma_{\leq k})$ is finite for all $k\geq 0$.
To finish we present an example of a simple graph $G$ with algebraic co-rank equal to $5$ but with no $5$-minor equal to $1$. 
That is, the $1$ can be obtained uniquely from a non trivial algebraic combination of $5$-minors of $L(G,X)$.

\medskip

We begin by proving that the path with $n+2$ vertices is $\gamma$-critical with algebraic co-rank equal to $n+1$.
\begin{Theorem}\label{camino}
If $n\geq 0$, then $P_{n+2}\in {\bf Forb}(\Gamma_{\leq n})$. 
\end{Theorem}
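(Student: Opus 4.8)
The plan is to invoke the characterization recorded just before the statement: $P_{n+2}\in\mathbf{Forb}(\Gamma_{\leq n})$ exactly when $P_{n+2}$ is $\gamma$-critical with $\gamma(P_{n+2})=n+1$. So there are two things to establish: first pin down the algebraic co-rank of $P_{n+2}$, and then verify that deleting any single vertex strictly decreases it.

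For the value of the co-rank, write $N=n+2$ and let $L=L(P_N,X_{P_N})$ be the tridiagonal generalized Laplacian with $x_1,\dots,x_N$ on the diagonal and $-1$'s on the two off-diagonals. First I would exhibit an $(N-1)$-minor equal to $\pm1$ by deleting the first row and the last column, that is, by taking the submatrix $M=L[\{2,\dots,N\};\{1,\dots,N-1\}]$. A direct inspection of its entries shows that $M_{pq}=L_{p+1,q}$ equals $-1$ on the diagonal ($q=p$), equals $x_{p+1}$ on the superdiagonal, equals $-1$ two steps above the diagonal, and equals $0$ strictly below it; hence $M$ is upper triangular with every diagonal entry $-1$, so $\det M=(-1)^{N-1}=\pm1$. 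This forces $I_{N-1}(P_N,X_{P_N})=\langle1\rangle$ and therefore $\gamma(P_N)\geq N-1$. For the reverse inequality, observe that $I_N(P_N,X_{P_N})$ is generated by the single polynomial $\det L$, whose expansion contains the monomial $x_1x_2\cdots x_N$ coming from the product of the diagonal; being nonconstant, $\det L$ is not a unit of $\mathbb{Z}[X_{P_N}]$, so $I_N\neq\langle1\rangle$ and $\gamma(P_N)\leq N-1$. Thus $\gamma(P_N)=N-1$ for every $N\geq1$, and in particular $\gamma(P_{n+2})=n+1$. This single computation settles the co-rank of paths of all lengths simultaneously, which is precisely what the criticality step requires.

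For $\gamma$-criticality I would analyze $P_{n+2}\setminus v$ according to the position of $v$. If $v$ is an endpoint, then $P_{n+2}\setminus v\cong P_{n+1}$, and by the formula just obtained $\gamma(P_{n+1})=n<n+1$. If $v$ is an internal vertex, then $P_{n+2}\setminus v\cong P_a+P_b$ with $a+b=n+1$ and $a,b\geq1$; applying the additivity $\gamma(G+H)=\gamma(G)+\gamma(H)$ for vertex-disjoint graphs (the remark following Lemma~\ref{lemm:corrvaldisjoint}) together with $\gamma(P_a)=a-1$ and $\gamma(P_b)=b-1$ yields $\gamma(P_a+P_b)=a+b-2=n-1<n+1$. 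In both cases the co-rank drops, so $P_{n+2}$ is $\gamma$-critical and the claim follows.

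I do not expect a genuine obstacle; the only points demanding care are bookkeeping. The delicate verification is that the chosen submatrix is triangular with unit diagonal (this is exactly why a path, rather than an arbitrary tree, behaves so cleanly), and the correct handling of the degenerate instances: the case $n=0$, where $P_2$ has no internal vertex and both deletions give $P_1=T_1$ with $\gamma=0$, and the internal deletions that produce a $P_1$ factor, which the convention $\gamma(P_1)=\gamma(T_1)=0$ (since $I_1(T_1,\{x\})=\langle x\rangle\neq\langle1\rangle$) absorbs into the formula $\gamma(P_a)=a-1$.
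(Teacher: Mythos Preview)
Your proof is correct and follows essentially the same strategy as the paper: both establish $\gamma(P_{n+2})=n+1$, then use the additivity $\gamma(G+H)=\gamma(G)+\gamma(H)$ from Lemma~\ref{lemm:corrvaldisjoint} together with $\gamma(P_m)=m-1$ to see that deleting any vertex yields a disjoint union of one or two paths with strictly smaller co-rank. The only difference is that the paper simply cites the formula $\gamma(P_{n+2})=n+1$ from \cite[Corollary 4.10]{corrval}, whereas you give a self-contained argument by exhibiting the upper-triangular $(N-1)$-minor $L[\{2,\dots,N\};\{1,\dots,N-1\}]$ with $-1$'s on the diagonal and observing that $\det L(P_N,X)$ is nonconstant; this is a perfectly valid and slightly more elementary alternative.
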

\begin{proof}
It is not difficult to prove $\gamma(P_{n+2})=n+1$, see corollary 4.10 of \cite{corrval}.
On the other hand, if $H=P_{n+2}\setminus v$ for some $v\in V(P_{n+2})$, then $H$ is a disjoint union of at most two paths.
Let $H=P_{n_1}+\cdots+P_{n_s}$ with $1\leq s \leq 2$ and $\sum_{i=}^s n_i=n+1$, then by
lemma \ref{lemm:corrvaldisjoint} we get that 
\[
\gamma(H)=\sum_{i=1}^s \gamma(P_{n_i})=\sum_{i=1}^s (n_i-1)=\sum_{i=1}^s n_i-s=n+1-s< n+1.
\]
Therefore $P_{n+2}\in {\bf Forb}(\Gamma_{\leq n})$.
\end{proof}

Now, we present another infinite family of graph that are $\gamma$-critical.
Let $K_n$ be the complete graph with $n$ vertices and $M_k$ a matching of $K_n$ with $k$ edges.
We begin by finding the critical group of $K_n\setminus M_k$.  

\begin{Proposition}\label{prop:match}
If $K_n$ be the complete graph with $n$ vertices and $M_k$ is a matching of $k$ edges, then
\[
K(K_n\setminus M_k)
\cong
\begin{cases}
	\mathbb{Z}_n^{n-2k-2}\oplus \mathbb{Z}_{n(n-2)}^k  & \text{if }n\geq 2k+2,\\
	 \mathbb{Z}_{n-2} \oplus \mathbb{Z}_{n(n-2)}^{k-1} & \text{if }n= 2k+1.\\
\end{cases}
\]
\end{Proposition}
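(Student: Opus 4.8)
The plan is to compute the Smith normal form of the reduced Laplacian $\widetilde L$ of $K_n\setminus M_k$, whose cokernel is exactly $K(K_n\setminus M_k)$. First I would order the vertices so that the removed edges are $\{1,2\},\dots,\{2k-1,2k\}$ and $2k+1,\dots,n$ are unmatched (such an unmatched vertex exists since $n\ge 2k+1$ in both regimes). Writing $J_{n,n}$ for the all-ones matrix and $L(M_k)$ for the Laplacian of the matching regarded as a graph on all $n$ vertices, one checks directly that $L(K_n\setminus M_k)=nI_n-J_{n,n}-L(M_k)$. Since $\mathbf 1$, the differences $e_{2i-1}-e_{2i}$, and a basis of $\ker L(M_k)\cap\mathbf 1^{\perp}$ diagonalize $J_{n,n}$ and $L(M_k)$ simultaneously, the Laplacian eigenvalues are $0$, then $n-2$ with multiplicity $k$, and $n$ with multiplicity $n-k-1$. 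By the Matrix--Tree Theorem the number of spanning trees is $n^{\,n-k-2}(n-2)^{k}$, which equals the order of the claimed group in both cases; this fixes $|K|$ and guides the expected invariant factors.

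The core of the argument is an integer row and column reduction of $\widetilde L$. The decisive local move is that, within each matched pair, the two rows agree outside their own two coordinates, so $\mathrm{row}(2i-1)-\mathrm{row}(2i)=(n-2)(e_{2i-1}-e_{2i})$; replacing $\mathrm{row}(2i-1)$ by this difference is unimodular and, crucially, uses a difference rather than a sum, so no spurious factor $2$ is introduced. Adding column $2i-1$ to column $2i$ for each pair, and then pivoting repeatedly on the $-1$ entries contributed by a fixed unmatched reference vertex, eliminates most of the matrix: exactly as in the classical reduction giving $K(K_m)\cong\mathbb Z_m^{\,m-2}$, this peels off $n-2k-2$ diagonal entries equal to $n$ (the $\mathbb Z_n$ factors) and collapses the rest to a small core matrix $C$ of size $O(k)$.

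It remains to compute the Smith normal form of $C$, and here lies the main difficulty. A naive decoupling is not allowed: treating each pair as an independent block $\mathrm{diag}(n-2,n)$ would wrongly predict the non-cyclic group $\mathbb Z_{\gcd(n,n-2)}\oplus\mathbb Z_{\mathrm{lcm}(n,n-2)}$ when $n$ is even, whereas the correct factor $\mathbb Z_{n(n-2)}$ is cyclic. The point is that the $-1$ cross terms inherited from $J_{n,n}$ make the relevant $2$-minors of $C$ coprime after dividing out the forced common factor, so that each matched pair contributes a single cyclic factor $\mathbb Z_{n(n-2)}$ to the Smith form rather than splitting; I would establish this by a direct gcd-of-minors (determinantal divisor) computation, tracking $\Delta_i(C)$. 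This yields invariant factors $n$ with multiplicity $n-2k-2$ and $n(n-2)$ with multiplicity $k$ when $n\ge 2k+2$. The regime $n=2k+1$ must be handled separately: there is a single unmatched vertex, so one of the $n$'s that would pair with an $n-2$ is missing, and a rank drop of $C$ modulo $n-2$ forces a lone invariant factor $n-2$, giving $\mathbb Z_{n-2}\oplus\mathbb Z_{n(n-2)}^{\,k-1}$. The divisibility $n\mid n(n-2)$ (respectively $n-2\mid n(n-2)$) confirms the ordering of the invariant factors. The principal obstacle throughout is the simultaneous bookkeeping of the rank-one term $J_{n,n}$ against the matched blocks, together with establishing cyclicity of the $n(n-2)$ factors through the minor computation.
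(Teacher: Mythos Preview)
Your outline is sound and would lead to a correct proof, but it differs from the paper's route in a way worth noting. For $n \ge 2k+2$ the paper performs a single explicit unimodular equivalence: with the matched pairs listed first and the deleted vertex $v_n$ unmatched, multiplying the reduced Laplacian on the left by $N_{n-1}(\mathbf{1})^t N_{n-1}(\mathbf{1})$ and on the right by $N_{n-1}(-\mathbf{1})$, where $N_{r}(\mathbf{a}) = \left(\begin{smallmatrix} 1 & \mathbf{a} \\ \mathbf{0} & I \end{smallmatrix}\right)$, yields directly the block form $I_1 \oplus nI_{n-2k-2} \oplus \bigoplus_{i=1}^k \left(\begin{smallmatrix} n-1 & 1 \\ 1 & n-1 \end{smallmatrix}\right)$. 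The decisive feature is that each $2\times 2$ block already carries an off-diagonal entry equal to $1$, so its Smith form is trivially $\mathrm{diag}(1,\, n(n-2))$ and the cyclicity issue you flag never arises---no determinantal-divisor bookkeeping is required. Your reduction, by contrast, first isolates factors of $n-2$ via the within-pair row subtraction and must then recombine them with the $n$'s coming from the unmatched vertices, which is precisely where the parity obstacle has to be fought by hand through the gcd-of-minors computation you leave unspecified. For $n = 2k+1$ the paper does not argue at all but simply cites Jacobson--Niedermaier--Reiner; your direct attack via a rank drop modulo $n-2$ is reasonable but would still need to be carried out explicitly. The eigenvalue and Matrix--Tree checks you include are a nice sanity layer that the paper omits.
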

\begin{proof}
If $n= 2k+1$ the result follows by~\cite[Theorem 1]{reiner2003}.
Therefore we can assume that $n\geq 2k+2$.
Given ${\bf a}\in \mathbb{Z}^k$, let $N_{k+1}({\bf a})$ be the matrix given by
\[
\left[
\begin{array}{cc}
1 & { \bf a }\\
{ \bf 0 }^t  & I_{k}\\
\end{array}
\right].
\]
If $M_k=\{v_1v_2,\ldots, v_{2k-1}v_{2k}\}$, then
\[
L(K_n\setminus M_k,v_n)
=
\left[
\begin{array}{cc}
	  [(n-2)I_2+J_2]\otimes I_k-J_{2k}    &     -J_{2k,n-2k-1}    \\
	    -J_{n-2k-1,2k}    &   nI_{n-2k-1}-J_{n-2k-1}     \\
\end{array}
\right],
\]
where $\otimes$ is the tensor product of matrices.
Now, since ${\rm det}(N_{n-1}({\bf a}))=1$ for all ${\bf a}$, then
\begin{eqnarray*}
L(K_n\setminus M_k,v_n) &\sim&N_{n-1}({\bf 1})^t N_{n-1}({\bf 1}) L(K_n\setminus M_k,v_n) N_{n-1}({\bf -1})\\
&=&I_{1} \oplus n I_{n-2k-2} \bigoplus_{i=1}^k 
\left[
\begin{array}{cc}
	  n-1 & 1 \\
	  1 & n-1 \\
\end{array}
\right].	
\end{eqnarray*}
On the other hand 
\begin{eqnarray*}
\left[
\begin{array}{cc}
	  n-1 & 1 \\
	  1 & n-1 \\
\end{array}
\right] & \sim & 
\left[
\begin{array}{cc}
	  0 & 1 \\
	  -1 & n-1 \\
\end{array}
\right]
\left[
\begin{array}{cc}
	  n-1 & 1 \\
	  1 & n-1 \\
\end{array}
\right]
\left[
\begin{array}{cc}
	  1 & -(n-1) \\
	  0 & 1 \\
\end{array}
\right]\\
& =&
\left[
\begin{array}{cc}
	  1 & 0 \\
	  0 & n(n-2) \\
\end{array}
\right].
\end{eqnarray*}

\noindent Therefore	$L(K_n\setminus M_k,v_n)\sim I_{k+1} \oplus n I_{n-2k-2} \oplus n(n-2)I_k$.
\end{proof}

\begin{Corollary}\label{cor:match}
If $n=2k+2$, then $K_n\setminus M_k\in {\bf Forb}(\Gamma_{\leq k})$.
\end{Corollary}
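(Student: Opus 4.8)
The plan is to use the characterization stated in the excerpt: $K_{2k+2}\setminus M_k\in{\bf Forb}(\Gamma_{\leq k})$ if and only if this graph is $\gamma$-critical with $\gamma=k+1$. So I would prove two things for $G:=K_{2k+2}\setminus M_k$: that $\gamma(G)=k+1$, and that $\gamma(G\setminus v)\leq k$ for every vertex $v$. Throughout I would assume $k\geq 2$; for $k=1$ the graph is $G=K_4\setminus e$, which contains an induced $P_3$ and is therefore not $\gamma$-critical, but that range is already covered by the path family of Theorem~\ref{camino}, so nothing is lost.

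For $\gamma(G)=k+1$ I would argue the two inequalities separately. The upper bound is formal: Proposition~\ref{prop:match} with $n=2k+2$ gives $K(G)\cong\mathbb{Z}_{n(n-2)}^{k}$, so the reduced Laplacian has exactly $k$ nontrivial invariant factors among its $n-1=2k+1$ invariant factors, whence $f_1(G)=k+1$; since $\gamma(H)\leq f_1(H)$ for every graph (theorem 3.6 of \cite{corrval}), this yields $\gamma(G)\leq k+1$. For the lower bound I would exhibit a single unit $(k+1)$-minor. Labelling the deleted matching edges $a_ib_i$ ($1\leq i\leq k$) and the two unmatched vertices $c,d$, consider the $(k+1)$-square submatrix $M$ of $L(G,X_G)$ with rows $a_1,\dots,a_k,c$ and columns $b_1,\dots,b_k,d$. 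These two index sets are disjoint, so no diagonal variable $x_v$ appears, and $M_{ij}=-1$ in every position except $M_{ii}=0$ for $i\leq k$ (the deleted edges); thus $M=\mathrm{diag}(1,\dots,1,0)-J_{k+1,k+1}$. A one–line adjugate (rank-one update) computation gives $\det M=-1$ for every $k$, so $1\in I_{k+1}(G,X_G)$ and $\gamma(G)\geq k+1$. Combining the bounds gives $\gamma(G)=k+1$.

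For $\gamma$-criticality I would classify $G\setminus v$ into two isomorphism types and compute $f_1$ via Proposition~\ref{prop:match}. Deleting an unmatched vertex leaves $K_{2k+1}\setminus M_{k}$ (the $n=2k+1$ branch), while deleting a matched vertex $a_i$ makes $b_i$ adjacent to all survivors and leaves $K_{2k+1}\setminus M_{k-1}$ (which falls in the $n\geq 2k+2$ branch, with $n'=2k+1\geq 2(k-1)+2$). For $k\geq 2$ both critical groups have exactly $k$ nontrivial invariant factors, i.e. $f_1(G\setminus v)=k$, so $\gamma(G\setminus v)\leq f_1(G\setminus v)=k<k+1=\gamma(G)$ for every $v$. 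This establishes that $G$ is $\gamma$-critical with $\gamma(G)=k+1$, hence $G\in{\bf Forb}(\Gamma_{\leq k})$.

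The genuinely nontrivial point is the lower bound $\gamma(G)\geq k+1$: the inequality $\gamma\leq f_1$ is the easy direction, and equality must be forced \emph{in the polynomial ring} $\mathbb{Z}[X_G]$ rather than merely over $\mathbb{Z}$. The disjoint row/column block above is exactly what does this, and the pleasant surprise is that its determinant is the constant $-1$ independent of $k$, so a single constant minor certifies $I_{k+1}(G,X_G)=\langle 1\rangle$. Everything else reduces to reading off invariant factors from Proposition~\ref{prop:match} and invoking $\gamma\leq f_1$, with the only care needed being the low-value degeneracy at $k=1$ noted above.
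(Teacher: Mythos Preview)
Your argument is essentially the paper's own: the same disjoint row/column $(k+1)$-minor (the paper writes the indices slightly differently and evaluates the determinant via \cite[theorem~3.13]{corrval}, while you compute it directly as a rank-one perturbation), the same appeal to Proposition~\ref{prop:match} together with $\gamma\le f_1$ for the upper bound, and the same two-case classification of $G\setminus v$ for criticality.

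Your caveat at $k=1$ is not just cautious bookkeeping but an actual correction: $K_4\setminus e$ contains an induced $P_3$, hence is \emph{not} $\gamma$-critical, and the Corollary as stated fails there. The paper's proof overlooks that for $n=2k+1=3$ the factor $\mathbb{Z}_{n-2}=\mathbb{Z}_1$ in Proposition~\ref{prop:match} is trivial, so $f_1(K_3\setminus M_1)=2$ rather than $k=1$, and the bound $\gamma\big((K_n\setminus M_k)\setminus v\big)\le k$ breaks down in exactly this case. Restricting to $k\ge 2$ (with $k=0$ being $P_2$) is the right fix.
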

\begin{proof}
First, by proposition~\ref{prop:match} we have that 
\[
\gamma(K_n\setminus M_k)\leq 
\begin{cases}
k+1 & \text{ if } n\geq 2k+2,\\
k & \text{ if } n= 2k+1.
\end{cases}
\] 
Now, let $n\geq 2k+2$, $M_k=\{v_1v_2,\ldots, v_{2k-1}v_{2k}\}$, and $M=L(K_n\setminus M_k,X)[\{1,\ldots,2k+1\},\{2,\ldots,2k+2\}]$
be a square submatrix of generalized Laplacian matrix of $K_n\setminus M_k$.
Then
\[
M=
\left[
\begin{array}{ccccc}
	  0 & -1  &   & -1 & -1\\ 
	 -1 & 0  &  &  -1 & -1\\ 
	  &  &\ddots &  -1 & -1 \\ 
	 -1 & -1 & -1 & 0  & -1\\
	 -1 &  -1 & -1 & -1 & -1 \\
\end{array}
\right].
\]
By~\cite[theorem 3.13]{corrval}, ${\rm det}(M)={\rm det}(L(K_k,X_{K_k}))|_{\{x_1=0, \ldots, x_{k-1}=0, x_k=-1\}}\overset{3.13}{=}-1$
and therefore, $\gamma(K_n\setminus M_k)= k+1$ for all $n\geq 2k+2$.
Finally, if $n= 2k+2$ and $v\in V(K_n\setminus M_k)$, then $(K_n\setminus M_k)\setminus v$ 
is equal to $K_{n-1}\setminus M_k$ or $K_{n-1}\setminus M_{k-1}$.
Thus, $\gamma((K_n\setminus M_k)\setminus v)\leq k$ and therefore $K_n\setminus M_k\in {\bf Forb}(\Gamma_{\leq k})$.
\end{proof}

This results proves that ${\bf Forb}(\Gamma_{\leq k})$ is not empty for all $k\geq 0$.

\begin{Corollary}
If $k\geq 0$, then ${\bf Forb}(\Gamma_{\leq k})$ is not empty.
\end{Corollary}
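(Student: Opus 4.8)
The plan is to prove the statement by simply exhibiting, for each fixed $k\geq 0$, an explicit graph that lies in ${\bf Forb}(\Gamma_{\leq k})$. Since nonemptiness of a set is certified by producing a single member, there is nothing to prove beyond naming a witness, and the two infinite families constructed earlier in this section each supply one.

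The most direct route is to invoke Theorem~\ref{camino}, which states that $P_{n+2}\in {\bf Forb}(\Gamma_{\leq n})$ for every $n\geq 0$. Specializing $n=k$ gives $P_{k+2}\in {\bf Forb}(\Gamma_{\leq k})$, so the set is nonempty. Alternatively, Corollary~\ref{cor:match} yields $K_{2k+2}\setminus M_k\in {\bf Forb}(\Gamma_{\leq k})$, providing a second witness drawn from the matching-deleted complete graphs analyzed in Proposition~\ref{prop:match}. Either citation closes the argument in one line; I would present the path version because its hypothesis $n\geq 0$ matches the corollary's range exactly, whereas the complete-graph version requires the auxiliary identification $n=2k+2$.

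There is no genuine obstacle here, since both heavy computations (the algebraic co-rank of $P_{n+2}$ and of $K_n\setminus M_k$, together with the verification that deleting any vertex strictly drops $\gamma$) were already carried out in Theorem~\ref{camino} and Corollary~\ref{cor:match}. The only point worth a remark is that the witness necessarily depends on $k$: as $k$ grows one must pass to a larger obstruction, which is automatic in both families. One could additionally note, though it is not needed for nonemptiness, that the graphs $\{P_{k+2}\}_{k\geq 0}$ are pairwise nonisomorphic, so the union $\bigcup_{k\geq 0}{\bf Forb}(\Gamma_{\leq k})$ is itself infinite, reinforcing the discussion at the opening of this section.
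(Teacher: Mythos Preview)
Your proposal is correct and matches the paper's approach: the corollary is stated immediately after Theorem~\ref{camino} and Corollary~\ref{cor:match} with no separate proof, the preceding sentence simply noting that these results establish nonemptiness. You cite both families just as the paper does; the only cosmetic difference is that the paper's placement emphasizes the matching-deleted complete graphs while you foreground the paths, but either witness suffices and the argument is the same.
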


For $i\geq 3$, the set ${\bf Forb}(\Gamma_{\leq i})$ is more complex than ${\bf Forb}(\Gamma_{\leq 1})$ and ${\bf Forb}(\Gamma_{\leq 2})$.
For instance, in~\cite{alfval3} was proved that ${\bf Forb}(\Gamma_{\leq 3})$ has 
$49$ graphs.
Moreover, we conjecture that ${\bf Forb}(\Gamma_{\leq k})$ is finite. 

\begin{Conjecture}\label{conj:finite}
For all $k\in \mathbb{N}$ the set ${\bf Forb}(\Gamma_{\leq k})$ is finite.
\end{Conjecture}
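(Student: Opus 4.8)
The plan is to reduce the finiteness of ${\bf Forb}(\Gamma_{\leq k})$ to a bound on the order of its members. Recall that, by the induced-subgraph monotonicity $\gamma(H)\leq \gamma(G)$ established earlier, a graph $G$ lies in ${\bf Forb}(\Gamma_{\leq k})$ exactly when it is $\gamma$-critical with $\gamma(G)=k+1$; consequently every proper induced subgraph $H\subseteq G\setminus v$ satisfies $\gamma(H)\leq \gamma(G\setminus v)<\gamma(G)$. Since there are only finitely many graphs on a bounded number of vertices, it suffices to exhibit a function $g\colon\mathbb{N}\to\mathbb{N}$ with $|V(G)|\leq g(k)$ for every $\gamma$-critical $G$ with $\gamma(G)=k+1$. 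Thus the whole problem becomes the single question of bounding the order of a $\gamma$-critical graph in terms of its algebraic co-rank.

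The first ingredient I would develop is a \emph{twin-reduction} principle controlling how $\gamma$ behaves under the duplication of a vertex. The guiding examples are $K_n$, whose vertices are pairwise true twins and which satisfies $\gamma(K_n)=1$ for all $n\geq 2$ by theorem~\ref{teo:gamma1}, and the trivial graph $T_n$, whose vertices are pairwise false twins and which has $\gamma(T_n)=0$ for all $n$ (the trivial graphs being exactly those of algebraic co-rank zero); in both families an unbounded twin class leaves $\gamma$ unchanged. Mirroring the additivity $\gamma(G+H)=\gamma(G)+\gamma(H)$ of lemma~\ref{lemm:corrvaldisjoint}, the goal is to prove that enlarging a twin class, by substituting a larger clique or independent set of mutually twin vertices, raises $\gamma$ by at most a fixed amount, so that deleting one member of a sufficiently large twin class cannot decrease $\gamma$. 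This would force every twin class of a $\gamma$-critical graph to have bounded size, thereby bounding all twin multiplicities by an absolute constant.

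The crux, and what I expect to be the main obstacle, is to bound instead the \emph{number} of twin classes, that is, to show that a twin-free (point-determining) graph with $\gamma\leq k$ has order bounded by a function of $k$; equivalently, that the algebraic co-rank of a twin-free graph grows with its number of vertices, the paths $P_n$ with $\gamma(P_n)=n-1$ being the prototype. I would attack this by relating $\gamma$ to the number of distinct neighborhoods, extending the explicit $3$-minor computations of theorems~\ref{teo:main1.2} and~\ref{teo:main1.3} into a general lower bound on the minors forced by many pairwise-distinct rows of $L(G,X_G)$; such an estimate, combined with the twin-reduction bound above, would yield $|V(G)|\leq g(k)$ and hence finiteness. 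That this last step is delicate is precisely why the statement remains conjectural: the induced-subgraph order is not a well-quasi-order in general, so finiteness cannot follow from abstract well-quasi-ordering and must exploit the specific arithmetic of the critical ideals. The known values $|{\bf Forb}(\Gamma_{\leq k})|=1,1,5,49$ for $k=0,1,2,3$, together with the two infinite families produced above, each contributing exactly one member to level $k$, namely $P_{k+2}$ and $K_{2k+2}\setminus M_k$ by corollary~\ref{cor:match}, are consistent with this picture and furnish the base cases for any inductive scheme.
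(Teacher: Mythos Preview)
The statement you are addressing is a \emph{Conjecture} in the paper, not a theorem: the paper offers no proof, only the assertion ``we conjecture that ${\bf Forb}(\Gamma_{\leq k})$ is finite'' followed by the formal statement. There is therefore no proof in the paper to compare your proposal against.

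Your submission is not a proof either, and you say as much: you outline a two-step strategy (bound twin-class sizes via a twin-reduction principle, then bound the number of twin classes by showing $\gamma$ grows for twin-free graphs) and explicitly flag the second step as ``the main obstacle'' and the reason ``the statement remains conjectural.'' So what you have written is a research plan, not a proof attempt, and it is honest about that. The first ingredient is plausible and in fact something close to it has since been established in the literature (duplicating a vertex changes $\gamma$ by at most one), but the second ingredient---that twin-free graphs of bounded $\gamma$ have bounded order---is exactly the heart of the problem and you have not supplied an argument for it, only the aspiration to ``extend the explicit $3$-minor computations \ldots\ into a general lower bound.'' That is the genuine gap, and it is the same gap that keeps the statement a conjecture in the paper.

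In short: the paper proves nothing here, and neither do you; your outline is a reasonable sketch of where the difficulty lies, but it should not be labeled a proof proposal.
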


Until now, all the graphs that were presented has algebraic co-rank equal to $k$ because its generalized Laplacian matrix has a $k$-minor equal to one.
Next example shows a graph $G$ with $\gamma_\mathbb{Z}(G)=5$ having no a $5$-minor equal to $1$.

\begin{Example}
Let $G$ be the graph on figure~\ref{fige} and
\begin{figure}[h]
\begin{center}
\begin{tabular}{c@{\extracolsep{2cm}}c}
\multirow{9}{3cm}{
	\begin{tikzpicture}[line width=1pt, scale=1]
		\tikzstyle{every node}=[inner sep=0pt, minimum width=4.5pt] 
		\draw (135:1) node (v1) [draw, circle, fill=gray] {};
		\draw (225:1)+(-1,0) node (v2) [draw, circle, fill=gray] {};
		\draw (225:1) node (v3) [draw, circle, fill=gray] {};
		\draw (315:1) node (v4) [draw, circle, fill=gray] {};
		\draw (315:1)+(1,0) node (v5) [draw, circle, fill=gray] {};
		\draw (45:1) node (v6) [draw, circle, fill=gray] {};
		\draw (0,-2) node (v7) [draw, circle, fill=gray] {};
		\draw (v1)+(-0.2,0.2) node () {\small $v_1$};
		\draw (v2)+(0,-0.3) node () {\small $v_2$};
		\draw (v3)+(-0.1,-0.3) node () {\small $v_3$};
		\draw (v4)+(0.1,-0.3) node () {\small $v_4$};
		\draw (v5)+(0,-0.3) node () {\small $v_5$};
		\draw (v6)+(0.2,0.2) node () {\small $v_6$};
		\draw (0,-0.5) node () {\small $G$};
		\draw (v1) -- (v3) -- (v4) -- (v6) -- (v1);
		\draw (v1) -- (v2) -- (v3);
		\draw (v6) -- (v5) -- (v4);
		\draw (v3) edge (v6);
		\draw (v7) edge (v1);
		\draw (v7) edge (v2);
		\draw (v7) edge (v3);
		\draw (v7) edge (v4);
		\draw (v7) edge (v5);
		\draw (v7) edge (v6);
	\end{tikzpicture}
}
& \\
&
$
L(G, X)=
\left[\begin{array}{ccccccc}
x_1 & \cellcolor[gray]{0.7}-1 &  \cellcolor[gray]{0.7}-1 &  0 & \cellcolor[gray]{0.7}0 & \cellcolor[gray]{0.7}-1 & \cellcolor[gray]{0.7}-1\\
 -1 & \cellcolor[gray]{0.7}x_2 & \cellcolor[gray]{0.7}-1 & 0 & \cellcolor[gray]{0.7}0 & \cellcolor[gray]{0.7}0 & \cellcolor[gray]{0.7}-1\\
 -1 & \cellcolor[gray]{0.7}-1 &  \cellcolor[gray]{0.7}x_3 & -1 & \cellcolor[gray]{0.7}0 & \cellcolor[gray]{0.7}-1 & \cellcolor[gray]{0.7}-1\\
0 &   \cellcolor[gray]{0.7}0 &  \cellcolor[gray]{0.7}-1 &  x_4 & \cellcolor[gray]{0.7}-1 & \cellcolor[gray]{0.7}-1 & \cellcolor[gray]{0.7}-1\\
0  & \cellcolor[gray]{0.7}0 &  \cellcolor[gray]{0.7}0 &  -1 & \cellcolor[gray]{0.7}x_5 & \cellcolor[gray]{0.7}-1 & \cellcolor[gray]{0.7}-1\\
-1 & 0 &  -1 &  -1 & -1 & x_6 & -1\\
-1 & -1 &  -1 &  -1 & -1 & -1 & x_7
\end{array}\right]
$\\
& \\
\end{tabular}
\end{center}
\caption{A graph $G$ with seven vertices and its generalized Laplacian matrix.}
\label{fige}
\end{figure}
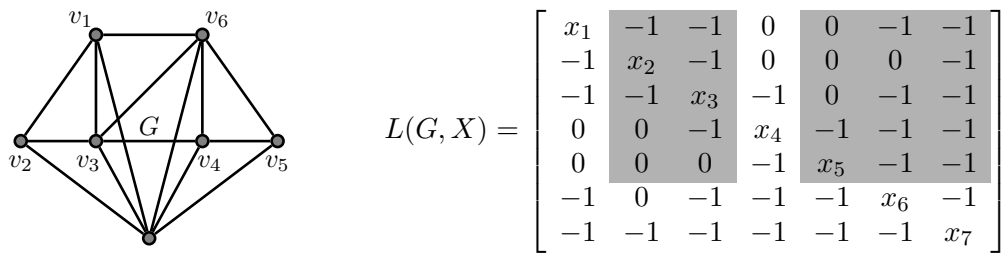
$f_1={\rm det}(L(G,X)[\{1, 2, 3,4, 5\}; \{2, 3, 5, 6, 7\}]) =x_2+x_5+x_2x_5$, and 
$f_2={\rm det}(L(G,X)[\{1, 2, 3, 5, 6\}; \{ 2,4, 5, 6, 7\}])=-(1+x_2+x_5+x_2x_5)$. 
Then $\langle f_1, f_2\rangle=1$ and therefore $\gamma_\mathbb{Z}(G)=5$. 
However, it is not difficult to check that $L(G,X)$ has no $5$-minor is equal to one.
\end{Example}



\begin{thebibliography}{99} 
\bibitem{alfval3}{C. A. Alfaro and C. E. Valencia, Graphs with three trivial critical ideals, in preparation.}
\bibitem{chan}{W. H. Chan, Y. Hou and W.C. Shiu, Graphs whose critical groups have larger rank, Acta Math. Sinica 27 (2011) 1663--1670.}
\bibitem{corrval}{H. Corrales and C. Valencia, On the critical ideals of graphs, preprint, arXiv:1205.3105 [math.AC]}
\bibitem{diestel}{R. Diestel, Graph Theory, Fourth Edition, Springer, 2010.}
\bibitem{godsil}{C. Godsil and G. Royle, Algebraic Graph Theory, GTM 207, Springer-Verlag, New York, 2001.}
\bibitem{jacobson}{N. Jacobson, Basic Algebra I, Second Edition, W. H. Freeman and Company, New York, 1985.}
\bibitem{reiner2003}{B. Jacobson, A. Niedermaier and V. Reiner, Critical groups for complete multipartite graphs and Cartesian 
products of complete graphs, J. Graph Theory 44 (2003) 231--250.}
\bibitem{lorenzini1989}{D. J. Lorenzini, Arithmetical Graphs, Math. Ann. 285 (1989) 481--501.}
\bibitem{lorenzini1991}{D. J. Lorenzini, A finite group attached to the laplacian of a graph, Discrete Mathematics 91 (1991) 277--282.}
\bibitem{lorenzini2008}{D. J. Lorenzini, Smith normal form and Laplacians, J. Combin. Theory B 98 (2008), 1271-1300.}
\bibitem{merino}{C. Merino, The chip-firing game, Discrete Mathematics 302 (2005) 188--210.}
\bibitem{pan}{Y. Pan and J. Wang, A note on the third invariant factor of the Laplacian matrix of a graph, preprint, arXiv:0912.3608v1 [math.CO] } 
\bibitem{wagner}{D. G. Wagner, The critical group of a directed graph, preprint, arXiv:math/0010241v1 [math.CO] }
\end{thebibliography}
\end{document}